\begin{document}
\newcommand{\qed}{\hphantom{.}\hfill $\Box$\medbreak}
\newcommand{\proof}{\noindent{\bf Proof \ }}

\newtheorem{theorem}{Theorem}[section]
\newtheorem{lemma}[theorem]{Lemma}
\newtheorem{corollary}[theorem]{Corollary}
\newtheorem{remark}[theorem]{Remark}
\newtheorem{example}[theorem]{Example}
\newtheorem{definition}[theorem]{Definition}
\newtheorem{construction}[theorem]{Construction}
\newtheorem{fact}[theorem]{Fact}
\newtheorem{proposition}[theorem]{Proposition}

\begin{center}
{\Large\bf Several classes of optimal Ferrers diagram rank-metric codes \footnote{Supported by NSFC under Grant $11431003$ and $11871095$.}
}

\vskip12pt

Shuangqing Liu, Yanxun Chang, Tao Feng\\[2ex] {\footnotesize Department of Mathematics, Beijing Jiaotong University, Beijing 100044, P. R. China}\\
{\footnotesize
16118420@bjtu.edu.cn, yxchang@bjtu.edu.cn, tfeng@bjtu.edu.cn}
\vskip12pt

\end{center}

\vskip12pt

\noindent {\bf Abstract:} Four constructions for Ferrers diagram rank-metric (FDRM) codes are presented. The first one makes use of a characterization on generator matrices of a class of systematic maximum rank distance codes. By introducing restricted Gabidulin codes,
the second construction is presented, which unifies many known constructions for FDRM codes. The third and fourth constructions are based on two different ways to represent elements of a finite field $\mathbb F_{q^m}$ (vector representation and matrix representation). Each of these constructions produces optimal codes with different diagrams and parameters. 
\vskip12pt

\noindent {\bf Keywords}: Ferrers diagram, rank-metric code, Gabidulin code, constant dimension code.

\noindent {\bf Mathematics Subject Classification (2010)}: 94B25, 11T71


\section{Introduction}

Let $\mathbb F_q$ be the finite field of order $q$, and $\mathbb F_q^n$ be the set of all vectors of length $n$ over $\mathbb F_q$. $\mathbb F_q^n$ is a vector space with dimension $n$ over $\mathbb F_q$. Given a nonnegative integer $k\leq n$, the set of all $k$-dimensional subspaces of $\mathbb F_q^n$, denoted by ${\cal G}_q(n,k)$, forms the {\em Grassmannian space} of order $n$ and dimension $k$ over $\mathbb F_q$. A nonempty subset of ${\cal G}_q(n,k)$ is called a {\em constant dimension code}, denoted by an $(n,k)_q$-CDC. The {\em subspace distance}
$d_S(\mathcal U,\mathcal V)={\rm dim}~\mathcal U+{\rm dim}~\mathcal V-2{\rm dim}~(\mathcal U\cap \mathcal V)$
for all $\mathcal U,\mathcal V\in {\cal G}_q(n,k)$ is used as a distance metric on ${\cal G}_q(n,k)$.

Constant dimension codes, motivated by their extensive application to error correction in random network coding, have become one of central topics in algebraic coding theory during the last ten years (see \cite{es09,es13,ev11,gr,hkkw,gy,kku,se13,st15,se131,s,tmbr,tr} for example). This interest stems from the groundbreaking work of K\"{o}tter and Kschischang \cite{kk}.

Error correction codes with the rank metric have been receiving steady attention in the literature due to their applications in network coding \cite{kk,skk} and storage systems \cite{r}. Let $\mathbb F^{m \times n}_q$ denote the set of all $m \times n$ matrices over $\mathbb F_q$. For a matrix $\boldsymbol{A} \in \mathbb F^{m \times n}_q$, the rank of $\boldsymbol{A}$ is denoted by rank$(\boldsymbol{A})$. $\mathbb F^{m \times n}_q$ is an $\mathbb F_q$-vector space. The {\em rank distance} on $\mathbb F^{m \times n}_q$ is defined by
\begin{center}
  $d_R(\boldsymbol{A}, \boldsymbol{B})={\rm rank}(\boldsymbol{A}-\boldsymbol{B})~{\rm for}~\boldsymbol{A}, \boldsymbol{B} \in \mathbb F^{m \times n}_q$.
\end{center}
An $[m \times n, k, \delta]_q$ {\em rank-metric code} $\mathcal C$ is a $k$-dimensional $\mathbb F_q$-linear subspace of $\mathbb F^{m \times n}_q$ with {\em minimum rank distance} $$\delta=\underset{\boldsymbol{A,B} \in \mathcal C, \boldsymbol{A}\neq \boldsymbol{B}}{\min}\{d_R(\boldsymbol{A},\boldsymbol{B})\}.$$
Clearly $$\delta=\underset{\boldsymbol{A} \in \mathcal C, \boldsymbol{A}\neq \boldsymbol{0}}{\min}\{{\rm rank}(\boldsymbol{A})\}.$$
The Singleton-like upper bound for rank-metric codes implies that $$k \leq {\rm max}\{m,n\}({\rm min}\{m,n\}- \delta +1)$$
holds for any $[m \times n, k, \delta]_q$ code. When the equality holds, $\cal C$ is called a {\em linear maximum rank distance code} (or an MRD$[m \times n, \delta]_q$ code in short). Linear MRD codes exists for all feasible parameters (cf. \cite{d,g,r}).

Write the $k\times k$ identity matrix as $\boldsymbol{I}_{k}$. Silva, Kschischang and K\"{o}tter \cite{skk} pointed out that for any MRD$[k\times (n-k),\delta]_q$ code $\cal C$, the lifted maximum rank distance code $\{{\rm row\ space\ of\ }(\boldsymbol{I}_k \mid \boldsymbol{A}):\boldsymbol{A} \in \mathcal C\}$ produces an $(n,k)_q$-CDC with minimum (nonzero) subspace distance $2\delta$, which asymptotically attain the known upper bounds on the size of constant dimension codes, and can be decoded efficiently in the context of random linear network coding. A further question is how to provide constructions for constant dimension codes which are larger than lifted MRD codes. The first progress was made by Etzion and Silberstein \cite{es09}, who presented a simple but effective construction, named the multilevel construction. This construction introduces a new family of rank-metric codes having a given shape of their codewords, namely, Ferrers diagram rank-metric codes.

Given positive integers $m$ and $n$, an $m \times n$ {\em Ferrers diagram} $\mathcal F$ is an $m \times n$ array of dots and empty cells such that (1) all dots are shifted to the right of the diagram; (2) the number of dots in each row is less than or equal to the number of dots in the previous row; (3) the first row has $n$ dots and the rightmost column has $m$ dots.

A Ferrers diagram $\mathcal F$ is identified with the cardinalities of its columns. Given positive integers $m$, $n$ and $1\leq \gamma_0 \leq \gamma_1 \leq \cdots \leq \gamma_{n-1}\leq m$, there exists a unique Ferrers diagram $\mathcal F$ of size $m\times n$ such that the $(j+1)$-th column of $\mathcal F$ has cardinality $\gamma_j$ for any $0\leq j\leq n-1$. In this case we write $\mathcal F=[\gamma_0,\gamma_1,\ldots,\gamma_{n-1}]$.

For a given $m \times n$ Ferrers diagram $\mathcal F$, an $[\mathcal F, k, \delta]_q$ {\em Ferrers diagram rank-metric} (FDRM) {\em code}, briefly an $[\mathcal F, k, \delta]_q$ code, is an $[m \times n, k, \delta]_q$ rank-metric code in which for each $m \times n$ matrix, all entries not in $\mathcal F$ are zero.
If $\mathcal F$ is a {\em full} $m\times n$ diagram with $mn$ dots, then its corresponding FDRM codes are just classical rank-metric codes.

Etzion and Silberstein \cite{es09} established a Singleton-like upper bound on FDRM codes.

\begin{lemma} {\rm (Theorem 1 in  \cite{es09})} \label{lem:upper bound}
Let $\delta$ be a positive integer. Let $v_i$, $0\leq i\leq \delta-1$, be the number of dots in a Ferrers diagram $\cal F$ which are not contained in the first $i$ rows and the rightmost $\delta-1-i$ columns. Then for any $[\mathcal F, k, \delta]_q$ code, $k\leq \min_{i\in\{0,1,\ldots,\delta-1\}}v_i$.
\end{lemma}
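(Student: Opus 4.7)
The plan is to prove, for each fixed $i\in\{0,1,\ldots,\delta-1\}$, that the natural $\mathbb F_q$-linear map sending a codeword $\boldsymbol{A}\in\mathcal C$ to the tuple of its entries in the $v_i$ dot-positions lying outside the first $i$ rows and outside the rightmost $\delta-1-i$ columns is injective. Once this is done, $k=\dim_{\mathbb F_q}\mathcal C\le v_i$ for each such $i$, and taking the minimum yields the bound.

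The core step is a rank bound. Let me consider a codeword $\boldsymbol{A}\in\mathcal C$ whose image under the above map is zero, i.e.\ every nonzero entry of $\boldsymbol{A}$ lies either in the first $i$ rows of the $m\times n$ array or in the rightmost $\delta-1-i$ columns. Write $\boldsymbol{A}=\boldsymbol{R}+\boldsymbol{C}$, where $\boldsymbol{R}$ agrees with $\boldsymbol{A}$ on the first $i$ rows and is zero elsewhere, while $\boldsymbol{C}$ agrees with $\boldsymbol{A}$ on the last $\delta-1-i$ columns and is zero elsewhere (overlapping entries may be assigned to either summand without loss). Then $\mathrm{rank}(\boldsymbol{R})\le i$ since $\boldsymbol{R}$ has at most $i$ nonzero rows, and $\mathrm{rank}(\boldsymbol{C})\le \delta-1-i$ since $\boldsymbol{C}$ has at most $\delta-1-i$ nonzero columns. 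Hence
$$\mathrm{rank}(\boldsymbol{A})\le \mathrm{rank}(\boldsymbol{R})+\mathrm{rank}(\boldsymbol{C})\le i+(\delta-1-i)=\delta-1.$$
Because $\mathcal C$ has minimum rank distance $\delta$, this forces $\boldsymbol{A}=\boldsymbol{0}$, establishing injectivity.

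The main obstacle is really bookkeeping: one has to be careful that every dot of $\mathcal F$ lies in the union of (first $i$ rows) $\cup$ (last $\delta-1-i$ columns) $\cup$ (the $v_i$ designated positions), so that the hypothesis on the image indeed pins down the support of $\boldsymbol{A}$ to the union of the first $i$ rows and the last $\delta-1-i$ columns. This is automatic from the definition of $v_i$, but must be invoked explicitly. After that, the rank splitting $\boldsymbol{A}=\boldsymbol{R}+\boldsymbol{C}$ together with subadditivity of rank finishes the proof, and optimizing over $i$ gives $k\le\min_{0\le i\le\delta-1}v_i$.
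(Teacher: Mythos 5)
Your proof is correct and coincides with the standard argument behind this bound: the paper states the lemma by citation to \cite{es09}, and the proof there is exactly your dimension count, namely that a codeword vanishing on the $v_i$ designated dot-positions has support in the first $i$ rows and rightmost $\delta-1-i$ columns, hence rank at most $i+(\delta-1-i)=\delta-1<\delta$, forcing it to be zero and making the projection onto those $v_i$ coordinates injective. Your handling of the overlap in the decomposition $\boldsymbol{A}=\boldsymbol{R}+\boldsymbol{C}$ and the bookkeeping about the three regions covering all dots of $\mathcal F$ are both accurate, so there is nothing to add.
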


An FDRM code attaining the upper bound in Lemma \ref{lem:upper bound} is called {\em optimal}. Clearly, an MRD$[m \times n, \delta]_q$ code attains this upper bound for a full $m\times n$ Ferrers diagram.
Much work has been done on constructing good or even optimal FDRM codes \cite{al,b,egrw,es09,lf,gr,st15,zg}. The reader is referred to Section 2.3 in \cite{lf} for a survey of known constructions for FDRM codes. Here we only quote three of them, which will be used later.

The following theorem was first given by Etzion and Silberstein \cite{es09}, and its proof is simplified in \cite{egrw}.

\begin{theorem} {\rm (Theorem 3 in \cite{egrw})} \label{thm:shortening}
Assume $\mathcal F=[\gamma_0,\gamma_1,\ldots,\gamma_{n-1}]$ is an $m \times n$ Ferrers diagram and each of the rightmost $\delta-1$ columns of $\mathcal F$ has at least $n$ dots. Then there exists an optimal $[\mathcal F, \sum_{i=0}^{n-\delta} \gamma_i, \delta]_q$ code for any prime power $q$.
\end{theorem}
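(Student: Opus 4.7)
The plan is to embed a Gabidulin MRD code defined over the smaller field $\mathbb F_{q^\gamma}$, where $\gamma:=\gamma_{n-\delta+1}$, into the subspace of $\mathbb F_{q^m}$ prescribed by column $n-\delta+1$ of the Ferrers diagram, and to count the dimension using the MDS property of Gabidulin codes. The crucial observation is that by shrinking the ambient field in this way the Ferrers constraints on the rightmost $\delta-1$ columns become vacuous, so the problem reduces to imposing only the constraints on the first $n-\delta+1$ columns.

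First I would fix an $\mathbb F_q$-basis of $\mathbb F_{q^m}$ identifying each column of an $m\times n$ matrix over $\mathbb F_q$ with an element of $\mathbb F_{q^m}$. Under this identification, the Ferrers condition on column $j$ becomes membership in an $\mathbb F_q$-subspace $V_j\subseteq\mathbb F_{q^m}$ of dimension $\gamma_j$, and the monotonicity of $(\gamma_j)$ yields a chain $V_0\subseteq V_1\subseteq\cdots\subseteq V_{n-1}=\mathbb F_{q^m}$; the hypothesis reads $\dim_{\mathbb F_q}V_{n-\delta+1}\geq n$. Next I would fix any $\mathbb F_q$-linear isomorphism $\iota:\mathbb F_{q^\gamma}\to V_{n-\delta+1}$, take a Gabidulin MRD $[\gamma\times n,\delta]_q$ code $\mathcal M_0\subseteq\mathbb F_{q^\gamma}^n$ (which exists because $\gamma\geq n$), and transport it coordinate-wise through $\iota$ to obtain $\mathcal M_1\subseteq V_{n-\delta+1}^n\subseteq\mathbb F_{q^m}^n$. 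Because $\iota$ is $\mathbb F_q$-linear, $\mathcal M_1$ is an $\mathbb F_q$-linear code of $\mathbb F_q$-dimension $\gamma(n-\delta+1)$ and minimum rank distance $\delta$; and because every codeword of $\mathcal M_1$ has all coordinates in $V_{n-\delta+1}$, the chain $V_{n-\delta+1}\subseteq V_j$ for $j\geq n-\delta+1$ automatically guarantees $c_j\in V_j$ for these $\delta-1$ rightmost indices.

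What is left is to impose $c_j\in V_j$ for $j=0,\ldots,n-\delta$, which under $\iota$ amounts to asking $\iota^{-1}(c_j)\in\iota^{-1}(V_j)\subseteq\mathbb F_{q^\gamma}$, an $\mathbb F_q$-subspace of dimension $\gamma_j$. The MDS property of $\mathcal M_0$ implies that the projection onto any $n-\delta+1$ coordinates is an $\mathbb F_{q^\gamma}$-linear isomorphism, so its restriction to $\prod_{j=0}^{n-\delta}\iota^{-1}(V_j)$ (an $\mathbb F_q$-subspace of dimension $\sum_{j=0}^{n-\delta}\gamma_j$) gives a code $\mathcal C\subseteq\mathcal M_1$ of the same dimension, still of minimum rank distance at least $\delta$. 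Optimality follows from Lemma \ref{lem:upper bound}, whose bound evaluates to $v_0=\sum_{j=0}^{n-\delta}\gamma_j$; verifying $v_i\geq v_0$ for $i=1,\ldots,\delta-1$ is a short check using the monotonicity of $(\gamma_j)$ and the hypothesis. The main conceptual step --- and the one requiring an idea rather than routine work --- is the decision to work over $\mathbb F_{q^\gamma}$ instead of $\mathbb F_{q^m}$, which is precisely what makes the rightmost Ferrers constraints trivial and reduces everything to a standard MDS dimension count.
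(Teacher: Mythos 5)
Your argument is correct (apart from the trivial case $\delta=1$, where the column index $n-\delta+1$ does not exist and the claim is immediate), and it is essentially the route underlying the paper's own machinery: shrinking the ambient field to an extension of degree at least $n$ and restricting the first $k=n-\delta+1$ coordinates of a systematic Gabidulin/MRD code to nested subspaces spanned by initial basis segments is exactly Lemma \ref{lem:subcode from MRD} applied with an $n\times n$ MRD code, equivalently Theorem \ref{the:from sys MRD-new2} with $l=1$, $r=0$, $t_1=n$ as recorded in Remark \ref{rek:main}(1). Your invocation of the MDS property to invert the projection onto the first $k$ coordinates is the systematic generator matrix $(\boldsymbol{I}_k\mid\boldsymbol{A})$ in disguise, so the two proofs coincide in substance.
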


Construction 2 in \cite{egrw} presented a method to obtain optimal FDRM codes by exploring subcodes of MRD codes, where each of the rightmost $\delta-1$ columns in Ferrers diagram $\cal F$ is required to have at least $n-1$ dots.

\begin{theorem}{\rm (Theorem 8 in \cite{egrw})}\label{thm:subcodes from Gab}
Let $\delta$ and $n$ be positive integers satisfying $2\leq \delta \leq n-1$. Let $\mathcal{F}=[\gamma_0,\gamma_1,\ldots,\gamma_{n-1}]$ be an $m\times n$ Ferrers diagram satisfying that
\begin{itemize}
\item[$(1)$] $\gamma_{n-1}\geq n-1+\gamma_0$,
\item[$(2)$] $\gamma_{n-\delta+1}\geq n-1$.
\end{itemize}
Then there exists an optimal $[\mathcal F, \sum_{i=0}^{n-\delta} \gamma_i, \delta]_q$ code for any prime power $q$.
\end{theorem}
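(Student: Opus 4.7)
The plan is to realize the desired code as a subcode of a Gabidulin MRD code. Any $\mathbb F_q$-subcode of an MRD$[m\times n,\delta]_q$ code has minimum rank distance at least $\delta$, and Lemma~\ref{lem:upper bound} caps the $\mathbb F_q$-dimension of any $[\mathcal F,k,\delta]_q$ code at $v_0=\sum_{i=0}^{n-\delta}\gamma_i$ (conditions (1) and (2) together should force $v_0=\min_i v_i$, which is the routine part). Hence the whole task reduces to exhibiting such a subcode of $\mathbb F_q$-dimension exactly $\sum_{i=0}^{n-\delta}\gamma_i$.

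Setup. Condition~(1) together with $\gamma_{n-1}=m$ gives $m\ge n$, so fix $\mathbb F_q$-linearly independent $\alpha_1,\ldots,\alpha_n\in\mathbb F_{q^m}$. Fix an $\mathbb F_q$-basis $\{b_1,\ldots,b_m\}$ of $\mathbb F_{q^m}$ and set $V_j=\langle b_{m-\gamma_j+1},\ldots,b_m\rangle_{\mathbb F_q}$, of $\mathbb F_q$-dimension $\gamma_j$, for $0\le j\le n-1$. Under the identification of $\mathbb F_{q^m}$ with $\mathbb F_q^m$ via this basis, the condition that the $(j{+}1)$-st column of a codeword is supported inside $\mathcal F$ becomes the statement that this column lies in $V_j$. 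Let $\mathcal G$ be the Gabidulin code of evaluations $c_f=(f(\alpha_1),\ldots,f(\alpha_n))$ with $f(x)=\sum_{i=0}^{n-\delta}a_i x^{q^i}$, $a_i\in\mathbb F_{q^m}$, and define
$$\mathcal C=\{c_f\in\mathcal G: f(\alpha_{j+1})\in V_j \text{ for all } 0\le j\le n-1\}.$$
By construction, $\mathcal C$ is an FDRM code on $\mathcal F$ with minimum rank distance $\ge\delta$.

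Counting. I would parameterize $\mathcal C$ by its leftmost $n-\delta+1$ column values. By linearized Lagrange interpolation, for any prescribed $\beta_1,\ldots,\beta_{n-\delta+1}\in\mathbb F_{q^m}$ there is a unique linearized polynomial $f$ of $q$-degree at most $n-\delta$ with $f(\alpha_j)=\beta_j$. Letting each $\beta_j$ range freely over $V_{j-1}$ contributes $\sum_{i=0}^{n-\delta}\gamma_i$ degrees of freedom over $\mathbb F_q$, so the remaining step is to show that, for a careful choice of basis $\{b_i\}$ and points $\alpha_1,\ldots,\alpha_n$, the values $f(\alpha_{n-\delta+2}),\ldots,f(\alpha_n)$ automatically lie in $V_{n-\delta+1},\ldots,V_{n-1}$ respectively.

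This automatic containment is the main obstacle, and is where both hypotheses play an essential role. Condition~(2) guarantees that $V_j$ has codimension at most $m-n+1$ for every $j\ge n-\delta+1$, while condition~(1) forces $\gamma_0\le m-n+1$, bounding how far from each $V_{j-1}$ the free $\beta_j$'s can push the values at the later points. A natural concrete choice is $\alpha_j=b_j$ for $1\le j\le n$; then each $f(\alpha_j)$ admits an explicit expansion in the basis $\{b_i\}$ whose support is controllable column by column, and the $(\delta-1)(m-n+1)$ potential codimensional constraints on the last $\delta-1$ columns should reduce to tautologies given the constraints already imposed on the leftmost $n-\delta+1$ columns. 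Verifying this reduction rigorously (equivalently, exhibiting for each $j\ge n-\delta+2$ an $\mathbb F_q$-linear expression of $f(\alpha_j)$ in $f(\alpha_1),\ldots,f(\alpha_{n-\delta+1})$ whose image lies in $V_{j-1}$ whenever $f(\alpha_k)\in V_{k-1}$ for $k\le n-\delta+1$) is the step I expect to require the bulk of the work.
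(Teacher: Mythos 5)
Your reduction of the problem to exhibiting a subcode of $\mathbb F_q$-dimension $v_0=\sum_{i=0}^{n-\delta}\gamma_i$ is fine (and so is the deferred check that conditions (1) and (2) give $v_0=\min_i v_i$), but the step you defer to the end --- the ``automatic containment'' of the constrained evaluations in the $V_j$ --- is not merely the bulk of the work: it is false for your construction, for \emph{every} choice of basis and evaluation points. Take $n=4$, $\delta=3$, $k=2$ and $\mathcal F=[3,3,3,7]$, so $m=7$; both hypotheses hold ($7\ge 3+3$ and $\gamma_2=3\ge n-1$) and the target dimension is $v_0=6$. Your $V_0,V_1,V_2$ are nested of dimensions $3,3,3$, hence all equal to a single $3$-dimensional space $V$, while the constraint at $\alpha_4$ is vacuous since $V_3=\mathbb F_{q^7}$. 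Here $f(x)=a_0x+a_1x^q$, and $(a_0,a_1)\mapsto(f(\alpha_1),f(\alpha_2))$ is an $\mathbb F_q$-linear bijection, so $W=\{f: f(\alpha_1),f(\alpha_2)\in V\}$ has dimension $6$ and your code is $W\cap \mathrm{ev}_{\alpha_3}^{-1}(V)$; reaching dimension $6$ requires $\mathrm{ev}_{\alpha_3}(W)\subseteq V$, i.e.\ $\dim\ker(\mathrm{ev}_{\alpha_3}|_W)\ge 3$. But $\{f: f(\alpha_3)=0\}=\{ah: a\in\mathbb F_{q^7}\}$ with $h(x)=\alpha_3^{q}x-\alpha_3x^{q}$, so this kernel equals $\{ah: a\in h(\alpha_1)^{-1}V\cap h(\alpha_2)^{-1}V\}$, and dimension $3$ would force $h(\alpha_1)^{-1}V=h(\alpha_2)^{-1}V$, i.e.\ $\lambda U=U$ with $U=h(\alpha_1)^{-1}V$ and $\lambda=h(\alpha_2)/h(\alpha_1)$. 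The stabilizer of $U$ is a subfield of $\mathbb F_{q^7}$ whose degree divides $\gcd(3,7)=1$, so $\lambda\in\mathbb F_q^{\ast}$, and $h(\alpha_2)=\lambda h(\alpha_1)$ unravels (via $(x/\alpha_3)^q=x/\alpha_3$ for $x=\alpha_2-\lambda\alpha_1$) to $\alpha_2-\lambda\alpha_1\in\mathbb F_q\alpha_3$, contradicting the $\mathbb F_q$-independence of $\alpha_1,\alpha_2,\alpha_3$. Hence the kernel has dimension at most $2$ and your code has dimension at most $5<6$, whatever $b_1,\dots,b_m$ and $\alpha_1,\dots,\alpha_n$ you pick. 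The only mechanism that makes constrained columns cost nothing is a subfield (data and points in $\mathbb F_{q^{t}}$ interpolate to values in $\mathbb F_{q^{t}}$), and inside a length-$n$ code over $\mathbb F_{q^m}$ that would need a divisibility such as $(n-1)\mid m$, which is unavailable since $m=\gamma_{n-1}$ is arbitrary.

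This is exactly why the actual proof (Theorem 8 of \cite{egrw}; recovered in this paper as the case $l=1$, $r=1$, $t_1=n-1$ of Lemma~\ref{lem:subcode from Gab} and Theorem~\ref{the:from sys MRD-new2}) does not use a length-$n$ MRD code at all. One works over $\mathbb F_{q^{n-1}}$ with a Gabidulin code of length $n-1$ and distance only $\delta-1$, so every middle column automatically has valid length at most $n-1$ --- this is what condition (2) pays for --- and the lost unit of distance is bought back through the $n$-th column: the generator matrix is extended by one extra column in such a way that deleting its first row and first column leaves a code of distance $\delta$, and a copy of the coordinate vector $\overline{\Psi}_{n-1}(u_0)$ of the first message symbol, of height $\gamma_0$, is stacked \emph{below} the $n-1$ top rows of that last column. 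Condition (1), $\gamma_{n-1}\ge (n-1)+\gamma_0$, is precisely the room needed for this appended copy --- not a codimension bound on $V_{n-1}$, as you read it. The rank argument then splits: if $u_0=0$ the codeword lies in the shortened code of distance $\delta$; if $u_0\neq 0$ the top part has rank at least $\delta-1$ and the appended block, supported in rows where all other columns vanish, contributes one more. (The boundary case $\gamma_{n-\delta}\ge n$, in which each of the rightmost $\delta-1$ columns has at least $n$ dots, is covered directly by Theorem~\ref{thm:shortening}.)
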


Etzion, Gorla, Ravagnani and Wachter-Zeh in \cite{egrw} also presented a way to obtain an FDRM code by combining two FDRM codes with the same dimension.

\begin{theorem}{\rm (Theorem 9 in  \cite{egrw})} \label{thm:combine with same dim}
Let $\mathcal F_i$ for $i=1,2$ be an $m_i \times n_i$ Ferrers diagram, and $\mathcal C_i$ be an $[\mathcal F_i, k, \delta_i]_q$ code. Let $\mathcal D$ be an $m_3 \times n_3$ full Ferrers diagram with $m_3n_3$ dots, where $m_3 \geq m_1$ and $n_3 \geq n_2$. Let
\begin{center}
$\mathcal F=\left(
  \begin{array}{cc}
    \mathcal F_1 & \mathcal D \\
      & \mathcal F_2 \\
  \end{array}
\right)$
\end{center}
be an $m \times n$ Ferrers diagram $\mathcal F$, where $m=m_2+m_3$ and $n=n_1+n_3$. Then there exists an $[\mathcal F, k, \delta_1+\delta_2]_q$ code.
\end{theorem}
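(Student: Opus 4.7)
The plan is to build $\mathcal{C}$ explicitly from bases of $\mathcal{C}_1$ and $\mathcal{C}_2$, padding the $\mathcal{D}$-block with zeros, and then to verify the support, dimension, and minimum rank distance separately.

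First, I would fix $\mathbb{F}_q$-bases $\{B_1^{(1)}, \ldots, B_k^{(1)}\}$ of $\mathcal{C}_1$ and $\{B_1^{(2)}, \ldots, B_k^{(2)}\}$ of $\mathcal{C}_2$. For each $i \in \{1, \ldots, k\}$, define the $m \times n$ matrix
$$
M_i = \begin{pmatrix} \widetilde{B}_i^{(1)} & \mathbf 0 \\ \mathbf 0 & \widetilde{B}_i^{(2)} \end{pmatrix},
$$
where $\widetilde{B}_i^{(1)}$ is $B_i^{(1)}$ placed at the top of the $m_3 \times n_1$ upper-left block of $\mathcal{F}$ (so that its dots land exactly on $\mathcal{F}_1$), and $\widetilde{B}_i^{(2)}$ is $B_i^{(2)}$ right-justified in the $m_2 \times n_3$ lower-right block (so that its dots land exactly on $\mathcal{F}_2$). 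The hypotheses $m_3 \geq m_1$ and $n_3 \geq n_2$ are precisely what guarantees that these zero-padded embeddings fit inside $\mathcal{F}$. Let $\mathcal{C}$ be the $\mathbb{F}_q$-linear span of $M_1, \ldots, M_k$; every codeword is supported on $\mathcal{F}_1 \cup \mathcal{F}_2 \subseteq \mathcal{F}$ (the $\mathcal{D}$-block is left as zeros).

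The dimension count is immediate: the map $\sum_i c_i M_i \mapsto \sum_i c_i B_i^{(1)}$ is an $\mathbb{F}_q$-linear surjection from $\mathcal{C}$ onto $\mathcal{C}_1$, and it is injective because the $B_i^{(1)}$ are linearly independent, so $\dim_{\mathbb{F}_q} \mathcal{C} = k$. The main step is the minimum-distance bound. Given any nonzero $M = \sum_i c_i M_i \in \mathcal{C}$, set $A_\ell = \sum_i c_i B_i^{(\ell)} \in \mathcal{C}_\ell$ for $\ell = 1, 2$. Linear independence of each chosen basis forces both $A_1$ and $A_2$ to be nonzero, so $\mathrm{rank}(A_\ell) \geq \delta_\ell$. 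Since $M$ is block-triangular with zero lower-left block and with diagonal blocks of ranks $\mathrm{rank}(A_1)$ and $\mathrm{rank}(A_2)$, the standard inequality for block-triangular matrices gives
$$
\mathrm{rank}(M) \geq \mathrm{rank}(A_1) + \mathrm{rank}(A_2) \geq \delta_1 + \delta_2,
$$
as desired.

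The only real obstacle is the bookkeeping of the block structure of $\mathcal{F}$: one has to check carefully that the zero-padded embeddings $\widetilde{B}_i^{(1)}, \widetilde{B}_i^{(2)}$ genuinely fit inside the positions reserved for $\mathcal{F}_1$ and $\mathcal{F}_2$ in the block decomposition of $\mathcal{F}$, and that the resulting array is itself a Ferrers diagram; this is exactly where the size hypotheses $m_3 \geq m_1$ and $n_3 \geq n_2$ enter. Everything else — linearity, the dimension count, and the rank inequality — is forced by the block-diagonal form together with basic linear algebra.
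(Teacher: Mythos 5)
Your proof is correct and is essentially the standard argument behind Theorem 9 of Etzion--Gorla--Ravagnani--Wachter-Zeh, which this paper cites without reproving: pairing the two bases $B_i^{(1)}\leftrightarrow B_i^{(2)}$ is exactly the choice of an $\mathbb F_q$-linear isomorphism $\mathcal C_1\to\mathcal C_2$ used there, and the block-diagonal embedding with zero $\mathcal D$-block, the projection argument for $\dim\mathcal C=k$, and the rank inequality $\mathrm{rank}(M)\geq\mathrm{rank}(A_1)+\mathrm{rank}(A_2)\geq\delta_1+\delta_2$ (valid since both $A_1$ and $A_2$ are nonzero for a nonzero codeword) are the same steps. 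No gaps.
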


\begin{example}\label{eg:2,3,3,5}
Let
\begin{center}
$\mathcal F=
    \begin{array}{cccc}
     \bullet &  \bullet & \bullet & \bullet  \\
      \bullet &  \bullet & \bullet & \bullet  \\
      & \bullet & \bullet & \bullet  \\
           &  &  & \bullet  \\
               &  &  & \bullet
    \end{array}$, \ \ \
$  \mathcal F_1=
    \begin{array}{ccc}
     \bullet &  \bullet & \bullet  \\
      \bullet &  \bullet & \bullet  \\
      & \bullet & \bullet
    \end{array}$\ \ \ and \ \ \
$\mathcal F_2=
    \begin{array}{c}
 \bullet  \\
 \bullet
\end{array}$.
\end{center}
By Theorem $\ref{thm:combine with same dim}$, there exists an optimal $[\mathcal F,2,4]_{q}$ code for any prime power $q$, where the needed $[\mathcal F_1,2,3]_{q}$ code and $[\mathcal F_2,2,1]_{q}$ code come from Theorem $\ref{thm:shortening}$. Its optimality is guaranteed by Lemma $\ref{lem:upper bound}$.
\end{example}

This paper continues the study in \cite{lf} to establish more constructions for optimal FDRM codes. Throughout this paper, let $[m]=\{0,1,\ldots,m-1\}$. For a matrix $\boldsymbol{A} \in \mathbb F^{m \times n}_q$, the rows and columns of $\boldsymbol{A}$ are indexed by $0,1,\ldots, m-1$ and $0,1,\ldots, n-1$, respectively. Denote by
$\boldsymbol{A}(i,j)$ the value in the $i$-th row and the $j$-th column of the matrix $\boldsymbol{A}$, where $i\in [m]$ and $j\in [n]$.

In Section 2.1, by using a description on generator matrices of a class of systematic MRD codes presented in \cite{al}, we give a class of optimal FDRM codes (see Theorem \ref{cfdrm}), which relaxes the condition on the number of dots in the $(\delta-1)$-th column from the right end compared with Theorem \ref{thm:shortening}. In Section 2.2, via restricted
Gabidulin codes, we obtain another class of optimal FDRM codes (see Theorem \ref{the:from sys MRD-new2}), which generalizes Theorem 3.11 in \cite{lf}, and Theorems 3.2 and 3.6 in \cite{zg}. Section 3 provides two constructions for FDRM codes based on two different ways to represent elements of a finite field $\mathbb F_{q^m}$ (vector representation and matrix representation). Concluding remarks are made in Section 4.

\section{Constructions based on subcodes of MRD codes}

Let $\mathbb F_q$ be the finite field of order $q$, and $\mathbb F_{q^m}$ be its extension field of order $q^m$. We use $\mathbb F^n_{q^m}$ to denote the set of all row vectors of length $n$ over $\mathbb F_{q^m}$. Let \boldmath $\mathbf{\beta}$ \unboldmath = $(\beta_0,  \beta_1, \dots, \beta_{m-1})$ be an ordered basis of $\mathbb F_{q^m}$ over $\mathbb F_q$. There is a natural bijective map $\Psi_m$ from $\mathbb F_{q^m}^n$
to $\mathbb F_{q}^{m \times n}$ as follows:
\renewcommand\theequation{\arabic{section}.\arabic{equation}}
 \begin{equation}\label{iso}
 \Psi_m: \mathbb F^n_{q^m} \longrightarrow \mathbb F^{m\times n}_q
\end{equation}
\begin{center}
  $\boldsymbol{a}=(a_0, a_1, \ldots, a_{n-1}) \longmapsto \boldsymbol{A},$
\end{center}
where $\boldsymbol{A}=\Psi_m(\boldsymbol{a})\in \mathbb F^{m\times n}_q$ is defined such that
\begin{equation*}
 a_j=\sum_{i=0}^{m-1} \boldsymbol{A}(i, j)\beta_i
\end{equation*}
for any $j\in \{0,1,\ldots,n-1\}$. For $a\in \mathbb F_{q^m}$, $(a)$ is a $1\times 1$ matrix and we simply write $\Psi_m((a))$ as $\Psi_m(a)$. It is readily checked that $\Psi_m$ satisfies linearity, i.e., $\Psi_m(x\boldsymbol{a}_1+y\boldsymbol{a}_2)=x\Psi_m(\boldsymbol{a}_1)+y\Psi_m(\boldsymbol{a}_2)$ for any $x,y\in \mathbb F_{q}$ and $\boldsymbol{a}_1,\boldsymbol{a}_2\in \mathbb F^n_{q^m}$.
The map $\Psi_m$ will be used to facilitate switching between a vector in $\mathbb F_{q^m}$ and its matrix representation over $\mathbb F_q$. In the sequel, we use both representations, depending on what is more convenient in the context and by slight abuse of notation, rank$(\boldsymbol{a})$ denotes rank$({\Psi_m}(\boldsymbol{a}))$.

\subsection{Construction from systematic MRD codes}

MRD codes play an important role in the constructions for FDRM codes. The following lemma was first implicitly shown in Section 5 in \cite{egrw}, and then given in Lemma 3.1 in \cite{lf}. However, Lemma 3.1 in \cite{lf} does not emphasize the optimality of the resulting FDRM code and more importantly, does not point out the number of dots in each of the leftmost $k$ columns of its corresponding Ferrers diagram explicitly, which will be useful later (see Theorems \ref{cfdrm}). Thus we restate this lemma here and provide more details on its proof.

\begin{lemma}{\rm \cite{egrw,lf}}\label{lem:subcode from MRD}
Assume that $m\geq n$. Let $\boldsymbol{G}$ be a generator matrix of a systematic MRD$[m\times n,\delta]_q$ code, i.e., $\textbf{G}$ is of the form $(\boldsymbol{I}_k|\boldsymbol{A})$, where $k=n-\delta+1$. Let $0\leq \lambda_0 \leq \lambda_1\leq \cdots \leq \lambda_{k-1}\leq m$. Let ${U}=\{(u_{0},\ldots,u_{k-1}) \in \mathbb F_{q^{m}}^k:\Psi_{m}(u_i)=(u_{i,0},\ldots,u_{i,\lambda_{i}-1},0,\ldots,0)^T, u_{i,j}\in \mathbb F_q,i\in [k], j\in [\lambda_i]\}$. Then
$$\mathcal C=\{\Psi_m(\boldsymbol{c}):\boldsymbol{c}=\boldsymbol{u}\boldsymbol{G},\boldsymbol{u}\in {U}\}$$ is an optimal $[\mathcal F,\sum_{i=0}^{k-1} \lambda_i,\delta]_q$ code, where $\mathcal F=[\gamma_0,\gamma_1,\ldots,\gamma_{n-1}]$ satisfies $\gamma_i=\lambda_i$ for each $i\in [k]$ and $\gamma_i=m$ for $k\leq i\leq n-1$.
\end{lemma}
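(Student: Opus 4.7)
My strategy is to check in order that $\mathcal C$ has the claimed dimension, sits inside the Ferrers diagram $\mathcal F$, has minimum rank distance at least $\delta$, and matches the upper bound in Lemma \ref{lem:upper bound}.

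First, $U$ is an $\mathbb F_q$-linear subspace of $\mathbb F_{q^m}^k$: coordinate $u_i$ ranges independently over the $\mathbb F_q$-span of $\beta_0,\ldots,\beta_{\lambda_i-1}$, which has $\mathbb F_q$-dimension $\lambda_i$, so $\dim_{\mathbb F_q}U=\sum_{i=0}^{k-1}\lambda_i$. The map $\boldsymbol u\mapsto\Psi_m(\boldsymbol u\boldsymbol G)$ is $\mathbb F_q$-linear, and since $\boldsymbol G=(\boldsymbol I_k\mid\boldsymbol A)$ the first $k$ coordinates of $\boldsymbol u\boldsymbol G$ equal $\boldsymbol u$, so the map is injective. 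Hence $\dim_{\mathbb F_q}\mathcal C=\sum_{i=0}^{k-1}\lambda_i$. The Ferrers-diagram condition is then immediate: for $i<k$ the $i$-th column of $\Psi_m(\boldsymbol u\boldsymbol G)$ is $\Psi_m(u_i)$, which by the definition of $U$ has zero entries in rows $\lambda_i,\ldots,m-1$; this matches $\gamma_i=\lambda_i$. For $k\leq i\leq n-1$ there is no restriction on that column, matching $\gamma_i=m$.

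For the minimum distance I would observe that $\mathcal C$ is contained in the MRD$[m\times n,\delta]_q$ code $\{\Psi_m(\boldsymbol v\boldsymbol G):\boldsymbol v\in\mathbb F_{q^m}^k\}$, while the injectivity argument above shows $\boldsymbol u\boldsymbol G\neq\boldsymbol 0$ for any nonzero $\boldsymbol u\in U$. Consequently every nonzero codeword of $\mathcal C$ has rank at least $\delta$.

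The main step is verifying optimality. Using that every dot of $\mathcal F$ occupies the top $\gamma_j$ positions of its column, removing the first $i$ rows and the rightmost $\delta-1-i$ columns (for $0\leq i\leq \delta-1$) leaves only columns $0,\ldots,k-1+i$, each contributing $\max(0,\gamma_j-i)$ dots, so
\begin{equation*}
v_i=\sum_{j=0}^{k-1}\max(0,\lambda_j-i)+i(m-i).
\end{equation*}
At $i=0$ this equals $\sum_{j=0}^{k-1}\lambda_j$, and for $i\geq 1$ one has $v_i-v_0=i(m-i)-\sum_{j=0}^{k-1}\min(i,\lambda_j)\geq i(m-i)-ki\geq 0$, where the last inequality uses the hypothesis $m\geq n=k+\delta-1$ to give $m-i\geq m-(\delta-1)\geq k$. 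Hence $\min_i v_i=v_0=\sum_{j=0}^{k-1}\lambda_j=\dim_{\mathbb F_q}\mathcal C$, so $\mathcal C$ is optimal. The only real (mild) obstacle is pinning down this upper-bound computation so that $v_0$ becomes the minimum; the hypothesis $m\geq n$ is exactly what is needed.
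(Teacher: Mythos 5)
Your proof is correct and follows essentially the same route as the paper's: dimension via linearity and the identity block of $\boldsymbol G=(\boldsymbol I_k\mid\boldsymbol A)$, the diagram shape from the first $k$ coordinates of $\boldsymbol{uG}$ equalling $\boldsymbol u$, minimum distance from $\mathcal C$ being a subcode of the MRD code, and optimality via Lemma \ref{lem:upper bound}. The only difference is one of detail: where the paper simply says optimality follows ``by examining the value of $v_0$,'' you explicitly compute $v_i=\sum_{j=0}^{k-1}\max(0,\lambda_j-i)+i(m-i)$ and verify $v_i\geq v_0$ using $m\geq n=k+\delta-1$, which correctly fills in the step the paper leaves to the reader.
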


\proof The linearity and the dimension of $\mathcal C$ can be verified straightforwardly.  Since $\textbf{G}$ is a generator matrix of an MRD$[m \times n, \delta]_{q}$ code $\mathcal C_{M}$, $\mathcal C$ is a subcode of $\mathcal C_{M}$. It follows that the minimum rank distance of $\mathcal C$ is $\delta$.

Each codeword in $\mathcal C$ is of the form $\Psi_m(\boldsymbol{c})$, where $\boldsymbol{c}=(e_0,e_1,\ldots,e_{n-1})=\boldsymbol{uG}$ for some $\boldsymbol{u}=(u_0,u_1,\ldots,u_{k-1})\in \mathbb F_{q^m}^k$.
For $i\in [k]$, $e_i=u_i,$ and so $\Psi_m(e_i)=\Psi_m(u_i)=(u_{i,0},\ldots,u_{i,\lambda_i-1},0,\ldots,0)$. Therefore, $\mathcal C$ is an $[\mathcal F,\sum_{i=0}^{k-1} \lambda_i,\delta]_q$ code, where $\mathcal F=[\gamma_0,\gamma_1,\ldots,\gamma_{n-1}]$ satisfies $\gamma_i=\lambda_i$ for each $i\in [k]$. Its optimality follows from Lemma \ref{lem:upper bound} by examining the value of $v_0$. \qed

Lemma \ref{lem:subcode from MRD} only gives details of the leftmost $k$ columns of the Ferrers diagram used for codewords in $\cal C$. If we could know more about the initial MRD code, then it would be possible to give a complete characterization of $\cal C$. In Lemma 3.4 in \cite{lf}, we presented a class of systematic MRD codes and applied them to construct optimal FDRM codes. Here we shall make use of another class of systematic MRD codes from \cite{al} to produce more optimal FDRM codes.

\begin{lemma} {\rm (Lemma $3.13$ in \cite{al})} \label{dmrd}
Let $m\geq n\geq \delta\geq 2$ and $k=n-\delta+1$. For any prime power $q$ and any $a_1,a_2,\ldots,a_k \in \mathbb F_{q^m}$ satisfying that $1,a_1,a_2,\ldots,a_k$ are linearly independent over $\mathbb F_{q}$, there exists a matrix $\boldsymbol{A} \in \mathbb F^{k \times (n-k)}_{q^m}$ such that its first column is given by $(a_1,\ldots,a_k)^T$ and $\textbf{G}=(\boldsymbol{I}_k|\boldsymbol{A})$ is a generator matrix of a systematic MRD$[m\times n,\delta]_q$ code.
\end{lemma}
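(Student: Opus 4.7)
The plan is to realize the code as a Gabidulin code whose evaluation points are chosen to encode the prescribed first column. Fix $\mathbb{F}_q$-linearly independent $\alpha_0,\dots,\alpha_{n-1}\in\mathbb{F}_{q^m}$ (possible because $m\ge n$) and take the Gabidulin generator matrix $\boldsymbol{G}'=(\alpha_j^{q^i})_{0\le i<k,\,0\le j<n}$, which defines an MRD$[m\times n,\delta]_q$ code. The leading $k\times k$ Moore submatrix $\boldsymbol{M}$ is invertible, so left-multiplication by $\boldsymbol{M}^{-1}$ brings $\boldsymbol{G}'$ into systematic form $(\boldsymbol{I}_k\mid\boldsymbol{A})$, and Lagrange interpolation for linearized polynomials identifies $\boldsymbol{A}_{i,j-k}=g_i(\alpha_j)$ for $k\le j<n$, where $g_i$ is the unique $q$-linearized polynomial of $q$-degree less than $k$ with $g_i(\alpha_\ell)=\delta_{i\ell}$ for $\ell\in[k]$. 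In particular, the first column of $\boldsymbol{A}$ is $(g_0(\alpha_k),\dots,g_{k-1}(\alpha_k))^T$.

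Requiring this to equal $(a_1,\dots,a_k)^T$ is equivalent, via $\boldsymbol{M}(a_1,\dots,a_k)^T=(\alpha_k^{q^0},\dots,\alpha_k^{q^{k-1}})^T$, to the system
$$\alpha_k^{q^i}\;=\;\sum_{j=0}^{k-1}\alpha_j^{q^i}\,a_{j+1},\qquad i=0,1,\dots,k-1.$$
The $i=0$ equation simply defines $\alpha_k=\sum_{j=0}^{k-1}\alpha_j a_{j+1}$; substituting it into the higher-$i$ equations and applying inverse Frobenius collapses them into the $\mathbb{F}_{q^m}$-linear system
$$\sum_{j=0}^{k-1}\alpha_j\,(a_{j+1}-a_{j+1}^{q^{-i}})\;=\;0,\qquad i=1,\dots,k-1,$$
whose $(k-1)\times k$ coefficient matrix has kernel of $\mathbb{F}_{q^m}$-dimension at least one.

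The main obstacle is to pick $(\alpha_0,\dots,\alpha_{k-1})$ in this kernel whose entries are $\mathbb{F}_q$-linearly independent and for which $\alpha_k$ is also independent from them; the extension to $\alpha_{k+1},\dots,\alpha_{n-1}$ linearly independent then follows immediately from $m\ge n$. The hypothesis that $1,a_1,\dots,a_k$ are $\mathbb{F}_q$-linearly independent is decisive here: it prevents the kernel from being trapped in a sub-locus where an $\mathbb{F}_q$-dependence is forced on the $\alpha_j$'s. For $k=2$ the unique kernel direction $(a_2^{q^{-1}}-a_2,\,a_1-a_1^{q^{-1}})$ is directly checked to be $\mathbb{F}_q$-linearly independent, since any dependence $c_0(a_2^{q^{-1}}-a_2)+c_1(a_1-a_1^{q^{-1}})=0$ with $c_i\in\mathbb{F}_q$ reduces, after one Frobenius, to $a_1-c\,a_2\in\mathbb{F}_q$ for some $c\in\mathbb{F}_q$, contradicting the hypothesis. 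For general $k$ the argument requires a more delicate inductive or determinantal analysis on the coefficient matrix, and this verification is the core technical content of Lemma~3.13 in \cite{al}; once it is secured, the corresponding Gabidulin code realizes the desired systematic MRD code.
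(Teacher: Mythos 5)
Your reduction is set up correctly: systematizing a Gabidulin generator via the leading $k\times k$ Moore matrix, identifying the first column of $\boldsymbol A$ with $\boldsymbol M^{-1}(\alpha_k^{q^0},\dots,\alpha_k^{q^{k-1}})^T$, and collapsing the prescription to the $\mathbb F_{q^m}$-linear system $\sum_{j=0}^{k-1}\alpha_j(a_{j+1}-a_{j+1}^{q^{-i}})=0$, $1\le i\le k-1$, are all sound, and your $k=2$ verification is correct. But the result is a reduction, not a proof, and the step you defer is deferred to the very statement under review: invoking ``the core technical content of Lemma 3.13 in \cite{al}'' is circular, since this lemma \emph{is} Lemma 3.13 of \cite{al} (the paper itself quotes it without proof, so a blind proof must carry the full weight). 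Concretely, when the $(k-1)\times k$ coefficient matrix $\boldsymbol B=(a_{j+1}-a_{j+1}^{q^{-i}})$ has rank $k-1$, the kernel is a single $\mathbb F_{q^m}$-line, and scaling by $\mathbb F_{q^m}^{*}$ does not change whether the entries of a kernel vector are $\mathbb F_q$-independent; so one must prove that the one spanning vector (given by the signed maximal minors of $\boldsymbol B$) has $\mathbb F_q$-independent entries. The hypothesis on $1,a_1,\dots,a_k$ enters non-trivially here: if some nonzero $(c_0,\dots,c_{k-1})\in\mathbb F_q^k$ satisfied $\sum_j c_ja_{j+1}\in\mathbb F_q$, then $\sum_j c_j(a_{j+1}-a_{j+1}^{q^{-i}})=s-s^{q^{-i}}=0$ with $s=\sum_j c_ja_{j+1}$, so $(c_0,\dots,c_{k-1})$ itself would be a kernel vector with maximally dependent entries; the hypothesis excludes exactly these degenerate directions, but excluding bad vectors is not exhibiting a good one, and entries can be $\mathbb F_q$-dependent without the dependence coefficients forming a kernel vector of this type. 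For $k\ge 3$ you supply no inductive, determinantal, or counting argument, and you also silently commit to realizing the lemma inside the Gabidulin family, an assumption your argument must discharge rather than assume.

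One of your two stated obstacles does evaporate, which is worth recording: the independence of $\alpha_k$ from $\alpha_0,\dots,\alpha_{k-1}$ is automatic once the entries of the kernel vector are independent. Indeed, raising the kernel relations to the $q^i$ gives $\alpha_k^{q^i}=\sum_{j=0}^{k-1}a_{j+1}\alpha_j^{q^i}$ for $0\le i\le k-1$; if $\alpha_k=\sum_j\lambda_j\alpha_j$ with $\lambda_j\in\mathbb F_q$, then subtracting yields $\sum_{j=0}^{k-1}(a_{j+1}-\lambda_j)\alpha_j^{q^i}=0$ for all $i\in[k]$, and the $k\times k$ Moore matrix $(\alpha_j^{q^i})$ of the $\mathbb F_q$-independent elements $\alpha_0,\dots,\alpha_{k-1}$ is invertible, forcing $a_{j+1}=\lambda_j\in\mathbb F_q$ and contradicting the independence of $1,a_1$. (Your claim that the extension to $\alpha_{k+1},\dots,\alpha_{n-1}$ is free is also fine, since the remaining columns of $\boldsymbol A$ are unconstrained and $m\ge n$.) Thus the single genuine gap is the existence, for every admissible $(a_1,\dots,a_k)$ and every $k\ge3$, of a kernel vector of $\boldsymbol B$ with $\mathbb F_q$-linearly independent entries; as written, your proposal proves the lemma only in the case $k=2$, i.e., $\delta=n-1$.
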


For a vector $(v_1,v_2,\ldots,v_n)$ of length $n$, if its rightmost nonzero component is $v_r$ for some $1\leq r\leq n$, then $r$ is said to be the {\em valid length} of this vector.

\begin{theorem}  \label{cfdrm}
Let $m\geq n\geq \delta\geq 2$ and $k=n-\delta+1$. If an $m\times n$ Ferrers diagram $\mathcal F=[\gamma_0,\gamma_1,\ldots,\gamma_{n-1}]$ satisfies
\begin{itemize}
\item[$(1)$] $\gamma_{k}\geq n$ or $\gamma_{k}-k\geq \gamma_i-i$ for each $i=0,1,\ldots,k-1$,
\item[$(2)$] $\gamma_{k+1}\geq n$,
\end{itemize}
then there exists an optimal $[\mathcal F,\sum_{i=0}^{k-1} \gamma_i,\delta]_q$ code for any prime power $q$.
\end{theorem}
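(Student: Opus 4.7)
The plan is to split into two cases according to the alternative in condition $(1)$. If $\gamma_k\geq n$, then condition $(2)$ together with the monotonicity of the $\gamma_j$'s makes every one of the rightmost $\delta-1$ columns of $\mathcal F$ contain at least $n$ dots, so Theorem \ref{thm:shortening} produces the desired optimal code directly. All the substance lies in the case $\gamma_k<n$, where condition $(1)$ forces $\gamma_k-k\geq\gamma_i-i$ for each $i\in[k]$, and hence $\gamma_i\leq\gamma_k<n$ for every $i\leq k$.

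The idea for this case is to run the subcode construction behind Lemma \ref{lem:subcode from MRD} inside the smaller field $\mathbb F_{q^n}$ rather than $\mathbb F_{q^m}$, and then pad each resulting $n\times n$ matrix with $m-n$ zero rows at the bottom. The payoff is that every entry of the raw codeword already lies in $\mathbb F_{q^n}$, so its valid length is automatically at most $n$, which is already within the bound $\gamma_j\geq n$ given by condition $(2)$ for each $j\geq k+1$, even though the entries of $\boldsymbol{A}$ past the first column are not under our control. Concretely, I would fix $\alpha\in\mathbb F_{q^n}$ whose minimal polynomial over $\mathbb F_q$ has degree $n$, work with the polynomial basis $\beta_i=\alpha^i$, and set $a_{i+1}=\alpha^{k-i}$ for $0\leq i\leq k-1$. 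Since $\{1,a_1,\ldots,a_k\}=\{1,\alpha,\ldots,\alpha^k\}$ consists of $k+1\leq n$ distinct basis elements and is therefore $\mathbb F_q$-linearly independent, Lemma \ref{dmrd} applied with $m=n$ supplies a generator matrix $\boldsymbol{G}=(\boldsymbol{I}_k\mid\boldsymbol{A})$ of a systematic MRD$[n\times n,\delta]_q$ code whose first column of $\boldsymbol{A}$ is $(a_1,\ldots,a_k)^T$. Define $\mathcal C$ to consist of all $m\times n$ matrices obtained by taking $\Psi_n(\boldsymbol{u}\boldsymbol{G})$ for $\boldsymbol{u}$ ranging over
\[
U=\{(u_0,\ldots,u_{k-1})\in\mathbb F_{q^n}^k:\Psi_n(u_i)=(u_{i,0},\ldots,u_{i,\gamma_i-1},0,\ldots,0)^T,\,i\in[k]\}
\]
and padding $m-n$ zero rows below.

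Linearity, the dimension $\sum_{i=0}^{k-1}\gamma_i$, and the minimum rank distance $\delta$ (because $\mathcal C$ sits inside a padded MRD$[n\times n,\delta]_q$ code) are routine to check. The crux is to verify that each codeword fits in $\mathcal F$: writing $\boldsymbol{u}\boldsymbol{G}=(u_0,\ldots,u_{k-1},e_k,\ldots,e_{n-1})$, one needs $e_j$ to have valid length at most $\gamma_j$ for every $j$. For $j<k$ this is built into $U$; for $j>k$ it holds automatically since $e_j\in\mathbb F_{q^n}$ has valid length at most $n\leq\gamma_j$. The interesting column is $j=k$, where
\[
e_k=\sum_{i=0}^{k-1}u_i\alpha^{k-i}=\sum_{i=0}^{k-1}\sum_{l=0}^{\gamma_i-1}u_{i,l}\,\alpha^{l+k-i},
\]
and condition $(1)$ gives $l+k-i\leq(\gamma_i-1)+(k-i)\leq\gamma_k-1<n$, so every exponent stays inside $\{0,1,\ldots,\gamma_k-1\}$ and no reduction by the minimal polynomial occurs; hence $e_k$ has valid length at most $\gamma_k$. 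Optimality then follows for free from Lemma \ref{lem:upper bound}, since producing a code of dimension $\sum_{i=0}^{k-1}\gamma_i=v_0$ forces equality in $\sum_{i=0}^{k-1}\gamma_i\leq\min_i v_i\leq v_0$.

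The main obstacle is precisely the column-$k$ computation above. The whole construction is engineered around the monomial choice $a_{i+1}=\alpha^{k-i}$ so that multiplication by $a_{i+1}$ in the polynomial basis is a plain exponent shift by $k-i$; condition $(1)$ is exactly the inequality $k-i\leq\gamma_k-\gamma_i$ needed to keep those shifts below $\gamma_k$. Spotting that this is the right shift to use, together with the less obvious move of dropping from $\mathbb F_{q^m}$ down to $\mathbb F_{q^n}$ before invoking Lemma \ref{dmrd}, is the main creative step; everything else is bookkeeping.
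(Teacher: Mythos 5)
Your proof is correct and takes essentially the same route as the paper's: the same case split (Theorem \ref{thm:shortening} when $\gamma_k\geq n$; otherwise Lemma \ref{dmrd} over $\mathbb F_{q^n}$ with a degree-$n$ polynomial basis and first column $(\beta^k,\beta^{k-1},\ldots,\beta)^T$, followed by the subcode construction of Lemma \ref{lem:subcode from MRD}), with the identical key estimate $\gamma_i+(k-i)\leq\gamma_k<n$ bounding the valid length of column $k$ and condition $(2)$ absorbing the remaining columns. Your only departures are cosmetic: you re-derive the subcode argument inline and make the $m-n$ zero-row padding explicit, which the paper leaves implicit.
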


\begin{proof} If $\gamma_{k}\geq n$, then since $\mathcal F$ is a Ferrers diagram, we have $\gamma_i\geq n$ for any $k\leq i\leq n-1$. Thus each of the rightmost $\delta-1$ columns of $\mathcal F$ has at least $n$ dots. By Theorem \ref{thm:shortening}, there exists an optimal $[\mathcal F, \sum_{i=0}^{k-1} \gamma_i, \delta]_q$ code for any prime power $q$.

If $\gamma_{k}<n$, then let $(1,\beta,\beta^2,\ldots,\beta^{n-1})$ be an ordered basis of $\mathbb F_{q^n}$ over $\mathbb F_q$. Note that $n\geq \delta\geq 2$, so $n-1\geq k$. We can apply Lemma \ref{dmrd} with $a_i=\beta^{i}$ for $1\leq i\leq k$ to obtain a matrix $\boldsymbol{A} \in \mathbb F^{k \times (n-k)}_{q^n}$ such that its first column is given by $(\beta^k,\beta^{k-1},\ldots,\beta)^T$ and $\textbf{G}=(\boldsymbol{I}_k|\boldsymbol{A})$ is a generator matrix of a systematic MRD$[n\times n,\delta]_q$ code. Then apply Lemma \ref{lem:subcode from MRD} by setting $\lambda_i=\gamma_i$ for $i\in [k]$ to obtain an optimal $[\mathcal F',\sum_{i=0}^{k-1} \gamma_i,\delta]_q$ code $\mathcal C$, where $\mathcal F'=[\gamma'_0,\gamma'_1,\ldots,\gamma'_{n-1}]$ satisfies $\gamma'_i=\gamma_i$ for each $i\in [k]$. We shall analyze the number of dots in each column of $\mathcal F'$ and show that $\mathcal F'\subseteq \mathcal F$, i.e., $\gamma'_i\leq \gamma_i$ for each $i\in [n]$, which implies the existence of an optimal $[\mathcal F, \sum_{i=0}^{k-1} \gamma_i, \delta]_q$ code by Lemma \ref{lem:upper bound}.

By Lemma \ref{lem:subcode from MRD}, each codeword in $\mathcal C$ is of the form $\Psi_n(\boldsymbol{c})$, where $\boldsymbol{c}=(e_0,e_1,\ldots,e_{n-1})=\boldsymbol{uG}$ for some $\boldsymbol{u}=(u_0,u_1,\ldots,u_{k-1})\in \mathbb F_{q^n}^k$.

For $i=k$, $e_{k}=\sum_{j=0}^{k-1} u_j \beta^{k-j}$ and so $\Psi_n(e_{k})=\sum_{j=0}^{k-1} \Psi_n(u_j\beta^{k-j})$. For $0\leq j\leq k-1$, $\Psi_{n}(u_j)=(u_{j,0},u_{j,1},\ldots,u_{j,\gamma_{j}-1},0,\ldots,0)^T$ implies  $u_j=u_{j,0}+u_{j,1}\beta+\cdots+u_{j,\gamma_j-1}\beta^{\gamma_j-1}$. It follows that for each $j\in [k]$, as a vector of length $n$, $\Psi_n(u_j\beta^{k-j})$ has a valid length of at most $\min\{\gamma_j+k-j,n\}$. Thus $\Psi_n(e_{k})$ has a valid length of at most $\max_{j\in [k]}\{\gamma_j+k-j\}$ if $\max_{j\in [k]}\{\gamma_j+k-j\}\leq n$, or $n$ otherwise. By Condition $(1)$, $\gamma'_k\leq \gamma_k$.

For $k+1\leq i\leq n-1$, $\Psi_n(e_i)$ has a valid length of at most $n$. By Condition $(2)$, $\gamma'_i\leq \gamma_i$. \qed
\end{proof}

Compared with Theorem $\ref{thm:shortening}$ in which each of the rightmost $\delta-1$ columns of $\mathcal F$ consists of at least $n$ dots, Theorem $\ref{cfdrm}$ requires each of the rightmost $\delta-2$ columns of $\mathcal F$ has at least $n$ dots and relaxes the condition on $\gamma_{n-\delta+1}$.



\begin{example} \label{fdrm5}
Let $n\geq \delta\geq 2$, $k=n-\delta+1$ and $\mathcal F=[\gamma_0,\gamma_1,\ldots,\gamma_{n-1}]$ be a Ferrers diagram satisfying $\gamma_{i+1}\geq \gamma_i+1$ for $i\in [k]$ and $\gamma_{i}\geq n$ for $i\in [n]\setminus[k+1]$. By Theorem $\ref{cfdrm}$, there exists an optimal $[\mathcal F,\sum_{i=0}^{k-1} \gamma_i,\delta]_q$ code for any prime power $q$.
\end{example}

\begin{example} \label{fdrm55}
Let $\mathcal F=[2,2,\gamma_2,\ldots,\gamma_{n-4},n-1,n,n]$ be an $n\times n$ Ferrers diagram, where $\gamma_i\leq i+2$ for $2\leq i\leq n-4$. By Theorem $\ref{cfdrm}$, there exists an optimal $[\mathcal F,\sum_{i=2}^{n-4}\gamma_i+4,4]_q$ code for any integer $n\geq 6$ and any prime power $q$. When $n=4$, for $\mathcal F=[2,3,4,4]$, by Theorem $\ref{cfdrm}$, there exists an optimal $[\mathcal F,2,4]_q$ code for any prime power $q$. When $n=5$, for $\mathcal F=[2,2,4,5,5]$, by Theorem $\ref{cfdrm}$, there exists an optimal $[\mathcal F,4,4]_q$ code for any prime power $q$.
\end{example}

We remark that by using Corollary 3.11 in \cite{al}, one can also obtain Example \ref{fdrm55}, but cannot obtain Example \ref{fdrm5}. For example one can compare Corollary 3.11 in \cite{al} and Example \ref{fdrm5} by examining the case of $\mathcal F=[1,2,\ldots,n-2,n,n]$ (an $n\times n$ Ferrers diagram) and $n\geq \delta=4$.


\subsection{Constructions based on subcodes of restricted Gabidulin codes}

Gabidulin codes are a special class of MRD codes. Let $m\geq n$ and $q$ be any prime power. Let $\delta$ be a positive integer. For any positive integer $i$ and any $a\in \mathbb F_{q^m}$, set $a^{[i]} \triangleq a^{q^i}$. A {\em Gabidulin code} $\mathcal{G}[m\times n,\delta]_q$ is an MRD$[m\times n,\delta]_q$ code whose generator matrix $\boldsymbol{G}$ in vector representation is
\renewcommand\theequation{\arabic{section}.\arabic{equation}}
\begin{equation}\label{gm}
\boldsymbol{G}=\left(
               \begin{array}{cccc}
                 g_0 & {g_1} & \cdots & g_{n-1} \\
                 g_0^{[1]} & g_1^{[1]} & \cdots & g_{n-1}^{[1]} \\
                 \vdots & \vdots & \ddots & \vdots \\
                 g_0^{[n-\delta]} & g_1^{[n-\delta]} & \cdots & g_{n-1}^{[n-\delta]} \\
               \end{array}
             \right)
\end{equation}
where $g_0, g_1, \ldots, g_{n-1} \in \mathbb F_{q^m}$ are linearly independent over $\mathbb F_q$ (see \cite{g}).

Let $l\geq 1$ and $1=t_0<t_1<t_2<\cdots<t_l$ be integers such that $t_1\mid t_2\mid \cdots\mid t_l$ and $t_{l-1}< n\leq t_l$. Let $t_{x}=s_{x}t_{x-1}$ for $1\leq x\leq l$. Since $t_1\mid t_2\mid \cdots\mid t_l$, we have $\mathbb F_{q^{t_1}}\subset\mathbb F_{q^{t_2}}\subset\cdots\subset\mathbb F_{q^{t_l}}$.
For $1\leq x\leq l$, let $$(\alpha_{x,0}=1, \alpha_{x,1}, \ldots, \alpha_{x,s_{x}-1})$$
be an ordered basis of $\mathbb F_{q^{t_{x}}}$ over $\mathbb F_{q^{t_{x-1}}}$. Note that $s_1=t_1$. Let \begin{equation}\label{basis-1}
\beta_z=\alpha_{1,z-1}\in \mathbb F_{q^{t_1}}
\end{equation}
for $1\leq z\leq t_1$ and
\begin{equation}\label{basis-2}
\beta_{yt_{x-1}+z}=\beta_{z}\alpha_{x,y}\in \mathbb F_{q^{t_x}}
\end{equation}
for $2\leq x\leq l$, $1\leq y\leq s_{x}-1$ and $1\leq z\leq t_{x-1}$.  Then
\begin{equation*}\label{basis}
(\beta_1=1,\beta_2,\ldots,\beta_{t_l})
\end{equation*}
is said to be {\em an ordered basis of $\mathbb F_{q^{t_l}}$ over $\mathbb F_q$ with respect to $(t_0,t_1,\ldots,t_l)$}. Let $\boldsymbol G'$ be a generator matrix in the form of (\ref{gm}) of a Gabidulin code $\mathcal G[t_l\times n,\delta]_{q}$, where $g_{j-1}=\beta_j$ for $1\leq j\leq n$ (note that $n\leq {t_l}$). Then we refer to such a Gabidulin code as a {\em restricted Gabidulin code with respect to $(\beta_1,\beta_2,\ldots,\beta_{t_l})$}.

\begin{proposition}\label{prop:basis}
Let $l\geq 1$ and $1=t_0<t_1<t_2<\cdots<t_l$ be integers such that $t_1\mid t_2\mid \cdots\mid t_l$. Let $(\beta_1=1,\beta_2,\ldots,\beta_{t_l})$ be an ordered basis of $\mathbb F_{q^{t_l}}$ over $\mathbb F_q$ with respect to $(t_0,t_1,\ldots,t_l)$. Let $t_{2}=s_{2}t_{1}$ if $l>1$. Take $w\in\{1,2,\ldots,s_2\}$ or $w=1$ if $l=1$. Then
\begin{itemize}
\item[$(1)$] $\beta_{i}\beta_j\in \{\beta_1, \beta_2, \ldots, \beta_{wt_1}\}$ for any $1\leq i\leq t_1$ and $1\leq j\leq wt_1$;
\item[$(2)$] $\beta_{i}\beta_j\in \mathbb F_{q^{t_{\theta+1}}}$ for any $1\leq \theta \leq l-1$, $1\leq i\leq t_{\theta+1}$ and $1\leq j\leq wt_1$.
\end{itemize}
\end{proposition}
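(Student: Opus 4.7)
The plan is to first establish, by induction on $x \in \{1,\ldots,l\}$, the auxiliary fact that $(\beta_1,\ldots,\beta_{t_x})$ is an $\mathbb F_q$-basis of $\mathbb F_{q^{t_x}}$. The base case $x=1$ is immediate from $(\ref{basis-1})$, because $t_0=1$ forces $s_1=t_1$ and $(\alpha_{1,0},\ldots,\alpha_{1,s_1-1})$ is already an $\mathbb F_q$-basis of $\mathbb F_{q^{t_1}}$. For the inductive step, read $(\ref{basis-2})$ as saying that $(\beta_1,\ldots,\beta_{t_x})$ is the enumeration of the products $\beta_z\alpha_{x,y}$ with $1\le z\le t_{x-1}$ and $0\le y\le s_x-1$ (using the convention $\alpha_{x,0}=1$ so the first $t_{x-1}$ entries are absorbed). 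Combining the inductive hypothesis on $(\beta_1,\ldots,\beta_{t_{x-1}})$ with the fact that $(\alpha_{x,0},\ldots,\alpha_{x,s_x-1})$ is an $\mathbb F_{q^{t_{x-1}}}$-basis of $\mathbb F_{q^{t_x}}$ then yields the claim. This auxiliary result is what lets the set $\{\beta_1,\ldots,\beta_{wt_1}\}$ in the statement be read as its $\mathbb F_q$-linear span.

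For part~$(1)$, the key step is a single factorization: every $\beta_j$ with $1\le j\le wt_1$ can be written uniquely as $\beta_j=\beta_z\alpha_{2,y}$ with $1\le z\le t_1$ and $0\le y\le w-1$ (taking $\alpha_{2,0}=1$ again to cover $j\le t_1$ by $y=0$). For $1\le i\le t_1$ one has $\beta_i\in \mathbb F_{q^{t_1}}$, hence $\beta_i\beta_z\in \mathbb F_{q^{t_1}}$, and the base case lets us expand $\beta_i\beta_z=\sum_{k=1}^{t_1}c_k\beta_k$ with $c_k\in \mathbb F_q$. Applying $(\ref{basis-2})$ termwise converts each $\beta_k\alpha_{2,y}$ into $\beta_{yt_1+k}$ (and does nothing when $y=0$), so $\beta_i\beta_j=\sum_{k=1}^{t_1}c_k\beta_{yt_1+k}$. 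Since $(y+1)t_1\le wt_1$, every index that appears lies in $\{1,\ldots,wt_1\}$, which is the required conclusion.

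For part~$(2)$, the chain of inequalities $wt_1\le s_2t_1=t_2\le t_{\theta+1}$, combined with Step~1 applied at $x=\theta+1$, places both $\beta_i$ (for $1\le i\le t_{\theta+1}$) and $\beta_j$ (for $1\le j\le wt_1$) inside $\mathbb F_{q^{t_{\theta+1}}}$; their product therefore lies in this field.

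The only real obstacle is bookkeeping the nested recursion $(\ref{basis-2})$ and verifying the index inequality $(y+1)t_1\le wt_1$. Once the basis property of $(\beta_1,\ldots,\beta_{t_x})$ is in hand, both parts reduce to the single observation that the map $\eta\mapsto \eta\alpha_{2,y}$ sends the $\mathbb F_q$-basis $(\beta_1,\ldots,\beta_{t_1})$ of $\mathbb F_{q^{t_1}}$ bijectively onto the block $(\beta_{yt_1+1},\ldots,\beta_{(y+1)t_1})$ of $(\beta_1,\ldots,\beta_{t_2})$.
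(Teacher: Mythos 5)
Your proof is correct and takes essentially the same route as the paper's: decompose each $\beta_j$ with $j\leq wt_1$ as $\beta_z\alpha_{2,y}$ via (\ref{basis-2}), use that $(\beta_1,\ldots,\beta_{t_1})$ is an $\mathbb F_q$-basis of $\mathbb F_{q^{t_1}}$ for part (1), and the subfield chain $\mathbb F_{q^{t_1}}\subset\cdots\subset\mathbb F_{q^{t_l}}$ for part (2). Your explicit reading of $\{\beta_1,\ldots,\beta_{wt_1}\}$ as its $\mathbb F_q$-span is in fact the interpretation the paper relies on in Theorem \ref{the:from sys MRD-new2} (literal set membership can fail, since $\beta_i\beta_z$ need not itself be a basis element of $\mathbb F_{q^{t_1}}$), so your termwise expansion $\beta_i\beta_z=\sum_k c_k\beta_k$ followed by $\beta_k\alpha_{2,y}=\beta_{yt_1+k}$ merely supplies the detail the paper declares immediate.
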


\proof By \eqref{basis-1} and \eqref{basis-2}, we have $\{\beta_1, \beta_2, \ldots, \beta_{wt_1}\}=
\{\beta_1, \beta_2, \ldots, \beta_{t_1},
\beta_{1}\alpha_{2,1},\beta_2\alpha_{2,1}$, $\ldots, \beta_{t_1}\alpha_{2,1},\ldots,\beta_1\alpha_{2,w-1},\beta_2\alpha_{2,w-1},\ldots,\beta_{t_1}\alpha_{2,w-1}\}$. Since $(\beta_1, \beta_2, \ldots, \beta_{t_1})$ is an ordered basis of $\mathbb F_{q^{t_{1}}}$ over $\mathbb F_{q}$, $(1)$ follows immediately. Due to $\mathbb F_{q^{t_1}}\subset\mathbb F_{q^{t_2}}\subset\cdots\subset\mathbb F_{q^{t_l}}$, $(2)$ holds. \qed

The following lemma is a generalization of Lemma $3.11$ in \cite{lf}, which only deals with the case of $l=1$. Lemma $3.11$ in \cite{lf} is a generalization of Lemma 5 in \cite{egrw}.

\begin{lemma}\label{lem:subcode from Gab}
Let $l$ be a positive integer and $1=t_0<t_1<t_2<\cdots<t_l$ be integers such that $t_1\mid t_2\mid \cdots\mid t_l$. Let $r$ be a nonnegative integer, and $\eta,d,\kappa$ be positive integers satisfying $t_{l-1}<\eta-r\leq t_l$, $\kappa=\eta-r-d+1$ and $r<\kappa\leq t_1$. Then there exists a matrix $ \textbf{G}=(\boldsymbol I_{\kappa}|\boldsymbol A_1|\cdots|\boldsymbol A_{l})\in \mathbb F_{q^{t_l}}^{\kappa \times \eta}$ of the following form
\begin{center}\begin{scriptsize}
$\left(
                           \begin{array}{cccccccccccccc}
                              1 &   &   &   &   &   &   & a_{0,\kappa} & \cdots & a_{0,\eta-r-1} & 0 & 0 & \cdots & 0 \\
                                & 1  &   &   &   &   &   & a_{1,\kappa} & \cdots & a_{1,\eta-r-1} & a_{1,\eta-r} & 0 & \cdots & 0 \\
                                &   & \ddots  &   &  &    &   & \vdots & \ddots & \vdots & \vdots & \vdots & \ddots & \vdots \\
                                &   &   & 1  &   &   &   & a_{r-1,\kappa} & \cdots & a_{r-1,\eta-r-1} & a_{r-1,\eta-r} & a_{r-1,\eta-r+1} & \cdots & 0\\
                                &   &   &   & 1  &   &   & a_{r,\kappa} & \cdots & a_{r,\eta-r-1} & a_{r,\eta-r} & a_{r,\eta-r+1} & \cdots & a_{r,\eta-1}\\
                                &   &   &   &   & \ddots  &   & \vdots & \ddots & \vdots & \vdots & \vdots & \ddots & \vdots\\
                                &   &   &   &   &   & 1 & a_{\kappa-1,\kappa} & \cdots & a_{\kappa-1,\eta-r-1}  & a_{\kappa-1,\eta-r} & a_{\kappa-1,\eta-r+1} & \cdots & a_{\kappa-1,\eta-1} \\
                           \end{array}
                         \right)$,\end{scriptsize}
\end{center}
where
$$\boldsymbol A_1\in \left\{
\begin{array}{lll}
\mathbb F^{\kappa\times (t_1-\kappa)}_{q^{t_1}}, &  {\rm if}\ l\geq 2, \\
\mathbb F^{\kappa\times (\eta-\kappa)}_{q^{t_1}}, & {\rm if}\ l=1,
\end{array}
\right.
$$
$\boldsymbol A_x\in \mathbb F^{\kappa\times {(t_x-t_{x-1})}}_{q^{t_x}}$ for $2\leq x\leq l-1$, and $\boldsymbol A_l\in \mathbb F^{\kappa\times (\eta-t_{l-1})}_{q^{t_l}}$ if $l\geq 2$,
satisfying that for each $0\leq \nu\leq r$, the sub-matrix obtained by removing the first $\nu$ rows, the leftmost $\nu$ columns and the rightmost $r-\nu$ columns of $\boldsymbol{G}$ produces a systematic MRD$[t_l \times (\eta-r),d+\nu]_q$ code.
\end{lemma}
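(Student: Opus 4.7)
The overall strategy is to construct $\mathbf{G}$ from the Moore matrix of the restricted Gabidulin code $\mathcal{G}[t_l \times (\eta-r), d]_q$ by row-reducing it to systematic form on the first $\eta-r$ columns and then carefully appending $r$ staircase-shaped columns on the right. The key structural observation is that the Moore matrix $M$ with entries $M_{i,j} = \beta_{j+1}^{[i]}$ ($0 \le i \le \kappa-1$, $0 \le j \le \eta-r-1$) has a nested property under removal of its top rows: for each $\nu \in \{0, 1, \ldots, r\}$, rows $\nu, \nu+1, \ldots, \kappa-1$ of $M$ form a Moore matrix with Frobenius exponents starting at $\nu$ rather than $0$, whose $\mathbb{F}_{q^{t_l}}$-row space is an MRD code of length $\eta-r$ and distance $d+\nu$. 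This nested chain $\mathcal{G}_r \subset \mathcal{G}_{r-1} \subset \cdots \subset \mathcal{G}_0$ is what will ultimately drive the MRD property of each $\mathbf{G}_\nu$.

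First I would compute the systematic form $V^{-1}M = (\mathbf{I}_\kappa \mid \mathbf{B}^{(0)})$, where $V$ is the leftmost $\kappa \times \kappa$ block of $M$. Since $\beta_1, \ldots, \beta_\kappa$ lie in $\mathbb{F}_{q^{t_1}}$ and are $\mathbb{F}_q$-linearly independent (using $\kappa \le t_1$), the block $V$ is an invertible $q$-Moore matrix over $\mathbb{F}_{q^{t_1}}$, so $V^{-1}$ has all entries in $\mathbb{F}_{q^{t_1}}$. By the tower construction \eqref{basis-1}--\eqref{basis-2}, each column $j$ of $M$ with $t_{x-1} \le j \le t_x - 1$ has entries in $\mathbb{F}_{q^{t_x}}$; since $V^{-1} \in \mathbb{F}_{q^{t_1}}^{\kappa\times\kappa} \subseteq \mathbb{F}_{q^{t_x}}^{\kappa\times\kappa}$, multiplication by $V^{-1}$ preserves this, so $\mathbf{B}^{(0)}$ automatically has the block structure $\mathbf{A}_1, \mathbf{A}_2, \ldots$ required by the lemma on its first $\eta-r-\kappa$ columns. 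I would then define the remaining $r$ columns of $\mathbf{G}$ by applying $V^{-1}$ to a Moore-type extension of $M$ in which the staircase zero pattern is imposed in the top rows; wherever the extension would index beyond $\beta_{t_l}$ (which can occur only in the rightmost block $\mathbf{A}_l$), I invoke Proposition \ref{prop:basis} to substitute a suitable element of $\mathbb{F}_{q^{t_l}}$, and the staircase zeros ensure that this substitution affects only the unconstrained rows.

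To verify the MRD property of $\mathbf{G}_\nu$ for each $\nu \in \{0, 1, \ldots, r\}$, I would exhibit a row-equivalence between $\mathbf{G}_\nu$ and a systematic form of the restricted Gabidulin code $\mathcal{G}[t_l \times (\eta-r), d+\nu]_q$. Concretely, removing the first $\nu$ rows of $V^{-1}M$ and the leftmost $\nu$ columns yields a systematic matrix of size $(\kappa-\nu) \times (\eta-r-\nu)$ corresponding to a puncturing of the nested subcode generated by rows $\nu, \ldots, \kappa-1$ of $M$; the $\nu$ appended staircase columns then restore the length to $\eta-r$ and the distance to $d+\nu$, producing a systematic generator of $\mathcal{G}[t_l \times (\eta-r), d+\nu]_q$, which is MRD. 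The main obstacle will be the joint compatibility: a single choice of the $r$ appended columns must simultaneously satisfy the MRD condition for every $\nu$, while also respecting the subfield requirements $\mathbf{A}_x \in \mathbb{F}_{q^{t_x}}^{\kappa \times (t_x - t_{x-1})}$. The resolution combines three ingredients: the nested Gabidulin chain (so that one systematic transformation, aligned with the top-to-bottom row ordering, handles all $\nu$ at once), the $\mathbb{F}_{q^{t_l}}$-linearity of $V^{-1}$ (so that the row-space identities needed for each $\nu$ reduce to a single identity $V^{-1}M \equiv M$ up to row equivalence), and Proposition \ref{prop:basis} (which guarantees the appended columns stay in the correct tower level).
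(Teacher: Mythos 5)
Your proposal contains a genuine gap at its central verification step, and it is exactly the step the whole lemma turns on. Your ``key structural observation'' is true as stated: rows $\nu,\ldots,\kappa-1$ of the Moore matrix $M$ do span an MRD code of distance $d+\nu$ (a Frobenius twist of a shorter Gabidulin code). But this chain does not survive your one-shot systematization. The bottom $\kappa-\nu$ rows of $V^{-1}M=(\mathbf{I}_\kappa\mid\mathbf{B}^{(0)})$ span the \emph{shortened} code, i.e.\ the set of codewords of the original distance-$d$ Gabidulin code whose first $\nu$ coordinates vanish --- not the code spanned by rows $\nu,\ldots,\kappa-1$ of $M$. These subspaces differ whenever $\nu\geq 1$: row $\nu$ of $M$ is $(\beta_1^{[\nu]},\ldots,\beta_{\eta-r}^{[\nu]})$ with first entry $\beta_1^{[\nu]}=1\neq 0$, whereas every vector in the span of the bottom rows of $V^{-1}M$ has first coordinate $0$. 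Moreover the shortened code has minimum rank distance only $d$, not $d+\nu$: writing $f=g\circ h$ with $h$ the annihilator polynomial of $\mathrm{span}_{\mathbb F_q}(\beta_1,\ldots,\beta_\nu)$, one sees it is MRD of length $\eta-r-\nu$ and dimension $\kappa-\nu$, hence distance $(\eta-r-\nu)-(\kappa-\nu)+1=d$. So after systematization the nested chain contributes nothing, and the entire boost from $d$ to $d+\nu$ must be manufactured by the $\nu$ appended staircase columns. Your sentence that these columns ``restore the distance to $d+\nu$'' is therefore precisely the assertion that has to be proved; nothing you established implies it.

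The construction of those columns is also not well defined in your sketch: if you extend $M$ by Moore-type columns and multiply by $V^{-1}$, the appended columns have no zeros in the top rows, and overwriting entries with zeros changes the row space, so all MRD conclusions are lost. This is why the paper does not systematize first. Its proof runs an $r$-step induction (following Lemma 3.11 in \cite{lf}, which goes back to Lemma 5 in \cite{egrw}): at step $i$ one adjoins a single new column and performs row operations chosen so that the bottom $(\kappa-i-1)$ rows, after deleting the first $i+1$ columns, are again a \emph{genuine} Gabidulin generator matrix with new generators $g_{i+1,j}$ that remain $\mathbb F_q$-linearly independent and lie in the correct tower subfields $\mathbb F_{q^{t_{x+1}}}$ --- an invariant that a single global row reduction destroys and cannot recover; the systematic form is imposed only at the very end, and only on the bottom $\kappa-r$ rows via $\mathrm{diag}(\mathbf{I}_r,\mathbf{T})$ with $\mathbf{T}\in\mathbb F_{q^{t_1}}^{(\kappa-r)\times(\kappa-r)}$, since the top $r$ rows were already put in staircase form during the induction. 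The pieces of your proposal that do align with the paper are peripheral: the observation that the leftmost $\kappa\times\kappa$ Moore block is invertible over $\mathbb F_{q^{t_1}}$ (as $\kappa\leq t_1$), so that the final change of basis preserves the columnwise subfield structure, and the appeal to Proposition \ref{prop:basis} for tower membership of entries. The core argument, however, is missing.
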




\begin{proof} (Sketch only) Let $(\beta_1=1,\beta_2,\ldots,\beta_{t_l})$ be an ordered basis of $\mathbb F_{q^{t_l}}$ over $\mathbb F_q$ with respect to $(t_0,t_1,\ldots,t_l)$.
Since $t_{l-1}<\eta-r\leq t_l$, we can take a restricted Gabidulin code $\mathcal{G}[t_l\times (\eta-r), d]_q$ with respect to $(\beta_1,\beta_2,\ldots,\beta_{t_l})$, whose generator matrix in vector representation is
\begin{center}
$\boldsymbol {G_0}=\left(
                   \begin{array}{cccc}
                     1 & g_{0,1} & \cdots & g_{0,\eta-r-1} \\
                     1 & g_{0,1}^{[1]} & \cdots & g_{0,\eta-r-1}^{[1]} \\
                     \vdots & \vdots & \ddots & \vdots \\
                     1 & g^{[\kappa-1]}_{0,1} & \cdots & g_{0,\eta-r-1}^{[\kappa-1]} \\
                   \end{array}
                 \right),$
\end{center}
where $g_{0,j}=\beta_{j+1}$ for $1\leq j\leq \eta-r-1$.

Using suitable row transformations, $\textbf G_0$ can be extended to produce $\textbf G_r$ by adding $r$ new columns. We need $r$ steps. For $0\leq i\leq r-1$, in Step $i$, let $\omega_i=\eta-r+i-2$ and $\textbf{G}_i=$
\begin{center}\setlength{\arraycolsep}{3.5pt}
\begin{tiny}
$\left(
\begin{array}{c;{2pt/2pt}ccccccccccccc}
      &      & 0 & 0 & \cdots & 0 & a_{0,\kappa} & \cdots & a_{0,\eta-r-1} & 0 & 0 & \cdots & 0 & 0 \\
      &      & 0 & 0 & \cdots & 0 & a_{1,\kappa} & \cdots & a_{1,\eta-r-1} & a_{1,\eta-r} & 0 & \cdots & 0 & 0 \\
    \textbf{I}_{i}    &    & \vdots & \vdots & \ddots & \vdots & \vdots & \ddots& \vdots & \vdots & \vdots & \ddots &\vdots & \vdots\\
      &      & 0 & 0 & \cdots & 0 & a_{i-2,\kappa} & \cdots & a_{i-2,\eta-r-1} & a_{i-2,\eta-r} & a_{i-2,\eta-r+1} & \cdots & 0 & 0 \\
      &      & 0 & 0 & \cdots & 0 & a_{i-1,\kappa} & \cdots & a_{i-1,\eta-r-1} & a_{i-1,\eta-r} & a_{i-1,\eta-r+1} & \cdots & a_{i-1,\omega_i} & 0 \\ \hdashline[2pt/2pt]
      &      & 1 & g_{i,i+1} & \cdots & g_{i,\kappa-1} & g_{i,\kappa} & \cdots & g_{i,\eta-r-1} & g_{i,\eta-r} & g_{i,\eta-r+1} & \cdots & g_{i,\omega_i} & g_{i,\omega_i+1} \\
      &      & 1 & g^{[1]}_{i,i+1} & \cdots & g^{[1]}_{i,\kappa-1} & g^{[1]}_{i,\kappa} & \cdots & g^{[1]}_{i,\eta-r-1} & g^{[1]}_{i,\eta-r} & g^{[1]}_{i,\eta-r+1} & \cdots & g^{[1]}_{i,\omega_i} & g^{[1]}_{i,\omega_i+1}\\
      &      & \vdots &\vdots & \ddots & \vdots & \vdots & \ddots & \vdots & \vdots & \vdots & \ddots & \vdots & \vdots \\
      &      & 1 & g^{[\kappa-i-1]}_{i,i+1} & \cdots & g^{[\kappa-i-1]}_{i,\kappa-1} & g^{[\kappa-i-1]}_{i,\kappa} & \cdots & g^{[\kappa-i-1]}_{i,\eta-r-1} & g^{[\kappa-i-1]}_{i,\eta-r} & g^{[\kappa-i-1]}_{i,\eta-r+1}  & \cdots & g^{[\kappa-i-1]}_{i,\omega_i} & g^{[\kappa-i-1]}_{i,\omega_i+1} \\
   \end{array}
 \right)$\end{tiny}
\end{center}
be a $\kappa\times (\omega_i+2)$ matrix, where
\begin{itemize}
\item[$(1)$] $1, g_{i,i+1},\ldots,g_{i,\omega_i+1} \in \mathbb F_{q^{t_l}}$ are linearly independent over $\mathbb F_q$, and $g_{i,j}\in \mathbb F_{q^{t_{x+1}}}$ ($i+1\leq j\leq \omega_i+1$) if $t_x< j+1\leq t_{x+1}$ for some $0\leq x< l$;
\item[$(2)$] the sub-matrix of $\textbf{G}_i$ obtained by removing its first $i$ rows and the leftmost $i$ columns produces a $\mathcal G[t_l \times (\eta-r),d+i]_q$ code.
\end{itemize}
When $i=0$, by \eqref{basis-1} and \eqref{basis-2}, $\textbf{G}_i$ is just $\textbf{G}_0$. By using exactly the same argument as that in the proof of Lemma $3.11$ in \cite{lf}, via elementary row transformations together with adding one new column, one can see how to obtain  $\textbf{G}_{i+1}$ from  $\textbf{G}_i$ for $0\leq i\leq r-1$. Here the reader only need to examine that if elements in the $j$-column of $\textbf{G}_i$ belong to $\mathbb F_{q^{t_{x+1}}}$, then after elementary row transformations used in the proof of Lemma $3.11$ in \cite{lf}, they are still in $\mathbb F_{q^{t_{x+1}}}$. And since $t_{l-1}<\eta-r\leq t_l$, elements in the added new column are taken all from $\mathbb F_{q^{t_{l}}}$.

Finally, we can choose an invertible matrix $\textbf{T} \in \mathbb F_{q^{t_1}}^{(\kappa-r) \times (\kappa-r)}$ such that
\begin{center}
 $\textbf{G}=\left(
                                    \begin{array}{cc}
                                     \textbf{I}_{r \times r} &  \\
                                      & \textbf{T} \\
                                    \end{array}
                                  \right) \cdot \textbf{G}_r$
\end{center}
is our required matrix. \qed
\end{proof}

Let $\alpha_1, \alpha_2, \ldots, \alpha_{n} \in \mathbb F_{q^m}$. Write ${\rm span}_{\mathbb F_q}(\alpha_1, \alpha_2, \ldots, \alpha_{n})\triangleq \{w_1 \alpha_1+ w_2\alpha_2+ \cdots+ w_{n}\alpha_{n}: w_i\in \mathbb F_q, 1\leq i\leq n\}$. The idea of the following theorem comes from Theorems $3.2$ and $3.6$ in \cite{zg}.

\begin{theorem}\label{the:from sys MRD-new2}
Let $l$ be a positive integer and $1=t_0<t_1<t_2<\cdots<t_l$ be integers such that $t_1\mid t_2\mid \cdots\mid t_l$. When $l>1$, let $t_{2}=s_{2}t_{1}$. Let $r$ be a nonnegative integer and $\delta$, $n$, $k$ be positive integers satisfying $r+1\leq \delta\leq n-r$, $t_{l-1}<n-r\leq t_l$, $k=n-\delta+1$ and $k\leq t_1$. Let $\mathcal F=[\gamma_0,\gamma_1,\ldots,\gamma_{n-1}]$ be an $m\times n$ Ferrers diagram $(m=\gamma_{n-1})$ satisfying
\begin{itemize}
\item[$(1)$] $\gamma_{k-1}\leq wt_1$,
\item[$(2)$] $\gamma_{k}\geq wt_1$ for $k<t_1$ and $\delta\geq 2$,
\item[$(3)$] $\gamma_{t_{\theta}}\geq t_{\theta+1}$ for $1\leq \theta \leq l-1$,
\item[$(4)$] $\gamma_{n-r+h}\geq t_l+\sum_{j=0}^{h} \gamma_j$ for $0\leq h\leq r-1$,
\end{itemize}
for some $w\in\{1,2,\ldots,s_2\}$ and for $w=1$ if $l=1$. Then there exists an optimal $[\mathcal F, \sum_{i=0}^{k-1} \gamma_i, \delta]_q$ code for any prime power $q$.
\end{theorem}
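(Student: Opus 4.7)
The strategy is to build the code as a subcode of the restricted Gabidulin--derived matrix supplied by Lemma \ref{lem:subcode from Gab}, supplemented with an auxiliary block in the rightmost $r$ columns that stores extra information about $u_0,\ldots,u_{r-1}$ and boosts the minimum distance from $d=\delta-r$ up to $\delta$. First I would invoke Lemma \ref{lem:subcode from Gab} with $\kappa=k$, $\eta=n$, and $d=\delta-r$; the hypothesis $t_{l-1}<n-r\leq t_l$ is given, $k\leq t_1$ is given, and the inequality $\delta\leq n-r$ yields $k=n-\delta+1\geq r+1>r$, so the lemma's parameter conditions $t_{l-1}<\eta-r\leq t_l$ and $r<\kappa\leq t_1$ all hold. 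This produces a matrix $\boldsymbol{G}=(\boldsymbol{I}_k\mid \boldsymbol{A}_1\mid\cdots\mid \boldsymbol{A}_l)$ over $\mathbb{F}_{q^{t_l}}$ whose sub-matrices indexed by $\nu\in\{0,1,\ldots,r\}$ are generator matrices of systematic MRD codes of distance $d+\nu$. With the ordered basis $(\beta_1,\ldots,\beta_{t_l})$ from Proposition \ref{prop:basis}, define
\[
U=\{\boldsymbol{u}\in\mathbb{F}_{q^{t_l}}^k : u_j\in\mathrm{span}_{\mathbb{F}_q}(\beta_1,\ldots,\beta_{\gamma_j}),\ 0\leq j\leq k-1\},
\]
which has size $q^{\sum_{j=0}^{k-1}\gamma_j}$ and is well defined by condition $(1)$ (since $\gamma_j\leq\gamma_{k-1}\leq wt_1\leq t_l$).

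For each $\boldsymbol{u}\in U$ let $(c_0,\ldots,c_{n-1})=\boldsymbol{u}\boldsymbol{G}$ and form the $m\times n$ codeword $\boldsymbol{C}$: in each column $0\leq i\leq n-r-1$ place $\Psi_{t_l}(c_i)$ padded with zeros, and in each staircase column $i=n-r+h$ place $\Psi_{t_l}(c_i)$ in the top $t_l$ rows followed by an auxiliary block of $\sum_{j=0}^{h}\gamma_j$ rows encoding the scalars $u_{j,s}$ ($0\leq j\leq h$, $0\leq s\leq\gamma_j-1$). Then I would verify $\boldsymbol{C}\in\mathcal{F}$ column by column. For $i\in[k]$, $c_i=u_i$ has valid length $\leq\gamma_i$. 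For $k\leq i\leq t_1-1$, Proposition \ref{prop:basis}(1) (after exchanging factors if needed so that one index lies in $[1,t_1]$) bounds the valid length of each product $u_ja_{j,i}$ with $a_{j,i}\in\mathbb{F}_{q^{t_1}}$ by $wt_1\leq\gamma_k\leq\gamma_i$ via condition $(2)$. For $i$ in the $\boldsymbol{A}_x$-block with $x\geq 2$, Proposition \ref{prop:basis}(2) and the tower $\mathbb{F}_{q^{t_1}}\subset\cdots\subset\mathbb{F}_{q^{t_l}}$ give $u_ja_{j,i}\in\mathbb{F}_{q^{t_x}}$, so valid length $\leq t_x\leq\gamma_{t_{x-1}}\leq\gamma_i$ via condition $(3)$. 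For the staircase column $n-r+h$, the top portion uses at most $t_l$ rows and the auxiliary block uses at most $\sum_{j=0}^h\gamma_j$ further rows, together bounded by $\gamma_{n-r+h}$ via condition $(4)$. Since $\boldsymbol{u}\mapsto\boldsymbol{C}$ is $\mathbb{F}_q$-linear and injective on the identity block, the code has dimension $\sum_{j=0}^{k-1}\gamma_j$, matching the value of $v_0$ in Lemma \ref{lem:upper bound} and yielding optimality.

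The main obstacle will be verifying that the minimum rank distance is indeed $\delta$. For a nonzero $\boldsymbol{u}\in U$, let $h^*=\min\{h:u_h\neq 0\}$. If $h^*\geq r$, the sub-MRD of Lemma \ref{lem:subcode from Gab} with $\nu=r$ immediately forces $\mathrm{rank}(\boldsymbol{C})\geq\delta$ from columns $r,\ldots,n-1$. If $h^*\leq r-1$, the sub-MRD with $\nu=h^*$ gives rank $\geq d+h^*=\delta-r+h^*$ from the top $t_l$ rows of columns $h^*,\ldots,n-r+h^*-1$, and the remaining $r-h^*$ units of rank must be supplied by the auxiliary block across the staircase columns $n-r+h^*,\ldots,n-1$. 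The auxiliary encoding has to be arranged so that these $r-h^*$ contributions are mutually $\mathbb{F}_q$-independent and independent of the top part; the role of the cumulative sum $\sum_{j=0}^h\gamma_j$ in condition $(4)$ is to supply exactly enough room to separate the contribution of $u_{h^*}$ from those of $u_{h^*+1},\ldots,u_{r-1}$ across successive staircase columns. Making this linear-independence argument rigorous, and exhibiting the precise row-rank bookkeeping that the auxiliary encoding must satisfy, is the most delicate step of the proof.
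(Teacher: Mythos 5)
Your proposal is structurally identical to the paper's proof: the same invocation of Lemma \ref{lem:subcode from Gab} with $\eta=n$, $d=\delta-r$, $\kappa=k$ (including the same check $r<k$ from $\delta\leq n-r$), the same subcode $U$, the same column-by-column valid-length analysis via Proposition \ref{prop:basis} and conditions $(1)$--$(4)$, the same optimality argument via $v_0$ in Lemma \ref{lem:upper bound}, and the same two-case split on $i^*=\min\{i: u_i\neq 0\}$ for the rank. The one step you explicitly defer --- the precise arrangement of the auxiliary block and the resulting independence argument --- is exactly where the paper does its only concrete piece of work, so it is worth recording how it closes: below the top $t_l$ rows, column $n-r+h$ carries the vectors $\overline{\Psi}_{t_l}(u_h),\overline{\Psi}_{t_l}(u_{h-1}),\ldots,\overline{\Psi}_{t_l}(u_0)$ stacked consecutively in this shifted (Toeplitz-like) order, occupying $\sum_{j=0}^{h}\gamma_j$ rows; this is what condition $(4)$ pays for.

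This particular arrangement makes both of the delicate points you flagged automatic. With $u_0=\cdots=u_{i^*-1}=0$, the below-line parts of columns $n-r,\ldots,n-r+i^*-1$ contain only blocks $\overline{\Psi}_{t_l}(u_j)$ with $j<i^*$ and hence vanish, so the codeword restricted to columns $i^*,\ldots,n-1$ has the form $\left(\begin{smallmatrix} X & Y \\ 0 & Z \end{smallmatrix}\right)$, where $X$ is the top-row part of columns $i^*,\ldots,n-r+i^*-1$ (rank at least $\delta-r+i^*$ by the $\nu=i^*$ sub-MRD) and $Z$ is the below-line part of the last $r-i^*$ columns; the zero block means ${\rm rank}(\boldsymbol C)\geq {\rm rank}(X)+{\rm rank}(Z)$, so independence from the top part need not be arranged separately. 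Within $Z$, the lowest nonzero block of column $n-r+h$ is the fixed nonzero vector $\overline{\Psi}_{t_l}(u_{i^*})$, whose last nonzero entry sits $\sum_{j=i^*+1}^{h}\gamma_j$ rows deeper as $h$ grows; since each $\gamma_j\geq 1$, these positions are strictly increasing in $h$, so the $r-i^*$ columns of $Z$ are in echelon position from the bottom and ${\rm rank}(Z)=r-i^*$, giving $\delta$ in total with no further bookkeeping. Two small omissions to repair when writing this up: dispatch $\delta=1$ trivially (as the paper does), and include the boundary case $n-r=k$ (i.e.\ $\delta=r+1$) in the column-$k$ analysis, where column $k$ is itself a staircase column; there $t_{l-1}<k\leq t_1$ forces $l=1$, and condition $(4)$ with $h=0$ supplies $\gamma_k\geq t_1+\gamma_0$, which is the paper's second case.
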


\begin{proof}
When $\delta=1$, the conclusion is trivial. Assume that $\delta\geq 2$. Let $(\beta_1=1,\beta_2,\ldots,\beta_{t_l})$ be an ordered basis of $\mathbb F_{q^{t_l}}$ over $\mathbb F_q$ with respect to $(t_0,t_1,\ldots,t_l)$. Note that $\delta\leq n-r$, so $r\leq n-\delta<n-\delta+1=k$. Applying Lemma \ref{lem:subcode from Gab} by taking $\eta=n$, $d=\delta-r$ and $\kappa=k$, we can obtain a matrix $\boldsymbol G=(\boldsymbol I_{k}|\boldsymbol A_1|\cdots|\boldsymbol A_{l})\in \mathbb F_{q^{t_l}}^{k \times n}$, where $$\boldsymbol A_1\in \left\{
\begin{array}{lll}
\mathbb F^{k\times (t_1-k)}_{q^{t_1}}, &  {\rm if}\ l\geq 2, \\
\mathbb F^{k\times (n-k)}_{q^{t_1}}, & {\rm if}\ l=1,
\end{array}
\right.
$$
$\boldsymbol A_x\in \mathbb F^{k\times (t_x-t_{x-1})}_{q^{t_x}}$ for $2\leq x\leq l-1$, and $\boldsymbol A_l\in \mathbb F^{k\times (n-t_{l-1})}_{q^{t_l}}$ if $l\geq 2$, satisfying that for each $0\leq \nu\leq r$, the sub-matrix obtained by removing the first $\nu$ rows, the leftmost $\nu$ columns and the rightmost $r-\nu$ columns of $\boldsymbol{G}$ produces a systematic MRD$[t_l \times (n-r),\delta-r+\nu]_q$ code.

For $i\in [k]$, fix any $\gamma_i\leq wt_1$, where $w=1$ if $l=1$, and $w\in\{1,2,\ldots,s_2\}$ if $l\geq 2$. Let $${U}=\left\{(u_{0},u_{1},\ldots,u_{k-1}) \in \mathbb F_{q^{t_l}}^k:
u_i\in {\rm span}_{{\mathbb F}_q}(\beta_1,\beta_2,\ldots,\beta_{\gamma_i}), i\in [k] \right\}.$$
Then for $i\in [k]$, $\Psi_{t_l}(u_i)=(u_{i,0},u_{i,1},\ldots,u_{i,\gamma_{i}-1},0,\ldots,0)^T$ for some $u_{i,j}\in \mathbb F_q$ where $0\leq j\leq \gamma_i-1$. Let $\overline{\Psi}_{t_l}(u_h)=(u_{h,0},u_{h,1},\ldots,u_{h,\gamma_{h}-1})^T$ for $0\leq h\leq r-1$ (note that $r<k$). Let $m'=t_l+\sum_{h=0}^{r-1} \gamma_h$. Let
\begin{center}
${\cal C}=\left\{
\left(
\begin{array}{c}
\displaystyle\frac{
\begin{array}{c}
\Psi_{t_l}( \boldsymbol{uG})
\end{array}
}{\frac{
\begin{array}{ccccccc}
0 & \cdots & 0 & \overline{\Psi}_{t_l}(u_0)  & \overline{\Psi}_{t_l}(u_1)  &  \cdots & \overline{\Psi}_{t_l}(u_{r-1}) \\
0 & \cdots  & 0 & 0 & \overline{\Psi}_{t_l}(u_0)  & \ddots & \overline{\Psi}_{t_l}(u_{r-2}) \\
\vdots &   & \vdots & \vdots &  \vdots & \ddots & \vdots \\
0 & \cdots & 0 & 0 &  0 &  \cdots & \overline{\Psi}_{t_l}(u_0) \\
\end{array}}
{\begin{array}{c}
\boldsymbol{O}_{(m-m')\times n}
\end{array}}
} \\
    \end{array}
  \right)\in \mathbb F^{m\times n}_q: \boldsymbol{u} \in U
\right\}$,
\end{center}
where $\boldsymbol{O}_{(m-m')\times n}$ is a zero matrix. Note that the four conditions in the assumption imply $m=\gamma_{n-1}\geq m'$: when $r>0$, by Condition $(4)$, $\gamma_{n-1}\geq m'$; when $r=0$ and $l=1$, by Condition $(2)$, $\gamma_{n-1}\geq m'=t_1$; when $r=0$ and $l>1$, by Condition $(3)$, $\gamma_{n-1}\geq m'=t_l$.

Let $\gamma'_i$, $i\in [n]$, be the maximal valid length of the $(i+1)$st column of the matrices in $\cal C$. We shall show that ${\cal C}$ is an optimal $[\mathcal F', \sum_{i=0}^{k-1} \gamma_i, \delta]_q$ code, where $\mathcal F'=[\gamma'_0,\gamma'_1,\ldots,\gamma'_{n-1}]$ satisfies $\gamma'_i=\gamma_i$ for each $0\leq i\leq k-1$ and $\gamma'_i\leq \gamma_i$ for each $k\leq i\leq n-1$. That yields $\mathcal F'\subseteq \mathcal F$ and implies the existence of an optimal $[\mathcal F, \sum_{i=0}^{k-1} \gamma_i, \delta]_q$ code by examining the value of $v_0$ in Lemma \ref{lem:upper bound}.

First, we analyze the number of dots in each column of $\mathcal F'$. Take any $\boldsymbol{u}=(u_0,u_1,\ldots$, $u_{k-1})\in U$ and set $\boldsymbol{uG}=(e_0,e_1,\ldots,e_{n-1})$.

For $i\in [k]$, we have $e_i=u_i$. So $\Psi_{t_l}(e_i)=\Psi_{t_l}(u_i)=(u_{i,0},u_{i,1},\ldots,u_{i,\gamma_i-1},0,\ldots,0)$. Since $k=n-\delta+1$ and $\delta\geq r+1$, we have
\begin{eqnarray}\label{eq}
n-r\geq k.
\end{eqnarray}
It follows that 
$\gamma'_i$ can be taken as $\gamma_i$. This ensures the optimality of $\cal C$ by Lemma \ref{lem:upper bound}.

Let the $k$-th column of $\boldsymbol G$ be $(b_0,b_1,\ldots,b_{k-1})^T$. Then $e_{k}=\sum_{i=0}^{k-1} u_i b_{i}$, and so $\Psi_{t_l}(e_{k})=\sum_{i=0}^{k-1} \Psi_{t_l}(u_ib_{i})$. For $i\in [k]$, $\Psi_{t_l}(u_i)=(u_{i,0},u_{i,1},\ldots,$ $u_{i,\gamma_{i}-1},0,$ $\ldots,0)^T$ implies  $u_i=u_{i,0}\beta_1+u_{i,1}\beta_2+\cdots+u_{i,\gamma_i-1}\beta_{\gamma_i}$. By \eqref{eq}, $n-r\geq k$, so we distinguish two cases. \underline{First case: $n-r\geq k+1$}. \underline{If $k<t_1$}, then the $k$-th column of $\boldsymbol G$ comes from $\boldsymbol A_1$, and so $b_i\in \mathbb F_{q^{t_1}}$ for $i\in [k]$. For $i\in [k]$, we have
$$u_ib_i=u_{i,0}\beta_1b_i+u_{i,1}\beta_2b_i+\cdots+u_{i,\gamma_i-1}\beta_{\gamma_i}b_i,$$
where $b_i\in \mathbb F_{q^{t_1}}$ and $u_{i,j}\in \mathbb F_{q}$ for $0\leq j\leq \gamma_i-1$. By Condition $(1)$, $\gamma_i\leq wt_1$ for each $i\in [k]$, so by Proposition \ref{prop:basis}(1), we have
$$u_{i,j}\beta_{j+1}b_i\in {\rm span}_{\mathbb F_q}(\beta_1, \beta_2, \ldots, \beta_{wt_1})$$
for each $0\leq j\leq \gamma_i-1$. Thus $\Psi_{t_l}(e_{k})$ has a valid length of at most $wt_1$. By Condition $(2)$, $\gamma'_k\leq \gamma_k$.
\underline{If $k=t_1$ and $l=1$}, then $n-r\geq k+1=t_1+1$. Since $l=1$ implies $n-r\leq t_1$, a contradiction occurs.
\underline{If $k=t_1$ and $l\geq 2$}, then $t_1<n$. The $k$-th column of $\boldsymbol G$ comes from $\boldsymbol A_2$, and so $b_i\in \mathbb F_{q^{t_2}}$ for $i\in [k]$. Thus by Proposition \ref{prop:basis}(2), $\Psi_{t_l}(e_{k})$ has a valid length of at most $t_2$. By Condition $(3)$, $\gamma'_k=\gamma'_{t_1}\leq \gamma_{t_1}=\gamma_k$. \underline{Second case: $n-r=k$}. Since $t_{l-1}<n-r=k\leq t_l$ and $k\leq t_1$, we have $l=1$. It follows that $\Psi_{t_l}(e_{k})$ has a valid length of at most $t_1$. Since $\overline{\Psi}_{t_l}(u_{0})$ has a valid length of at most $\gamma_0$, we obtain $\gamma'_{k}=\gamma'_{n-r}= t_1+\gamma_0$. By Condition $(4)$, $\gamma'_k=\gamma'_{n-r}\leq \gamma_k$.

For $l\geq 2$ and $1\leq \theta\leq l-1$, since $t_{l-1}<n-r\leq t_l$, we have $n-r>t_{\theta}$. Let the $t_{\theta}$-th column of $\boldsymbol G$ be $(b_{t_{\theta},0}$, $b_{t_{\theta},1},\ldots$, $b_{t_{\theta},k-1})^T$, which is the first column of $\boldsymbol A_{\theta+1}$. Then by Proposition \ref{prop:basis}(2), $e_{t_{\theta}}=\sum_{i=0}^{k-1} u_i b_{t_{\theta},i}\in \mathbb F_{q^{t_{\theta+1}}}$. Thus $\Psi_{t_l}(e_{t_{\theta}})$ has a valid length of at most $t_{\theta+1}$. By Condition $(3)$, $\gamma'_{t_{\theta}}\leq \gamma_{t_{\theta}}$.

For $0\leq h\leq r-1$, $\Psi_{t_l}(e_{n-r+h})$ has a valid length of at most $t_l$ and  $\overline{\Psi}_{t_l}(u_{h})$ has a valid length of at most $\gamma_h$. Thus we can take $\gamma'_{n-r+h}= t_l+\sum_{j=0}^{h} \gamma_j$ for $0\leq h\leq r-1$. By Condition $(4)$, $\gamma'_{n-r+h}\leq \gamma_{n-r+h}$.

Next, one can easily verify the linearity and the dimension of the code $\mathcal C$. Finally it suffices to examine the minimum rank weight of any nonzero codeword $\boldsymbol{C}$ from $\mathcal C$.

Let $\boldsymbol{C}$ be formed by $\boldsymbol{uG}=(u_0, u_1, \ldots, u_{k-1}) \boldsymbol{G}$. Let $i^*=\min\{i\in [k],u_i\neq 0, u_j=0 {~\rm for~any~} j<i\}$. Then $\boldsymbol{uG}=(0, \ldots,0,u_{i^*}, \ldots, u_{k-1}) \boldsymbol{G}$.

If $i^*< r$, then let $\Psi^*_{t_l}(\boldsymbol{uG})$ be an $t_l\times (n-r)$ matrix obtained by removing the leftmost $i^*$ columns and the rightmost $r-i^*$ columns of $\Psi_{t_l}(\boldsymbol{uG})$. By Lemma \ref{lem:subcode from Gab}, $\Psi_{t_l}^*(\boldsymbol{uG})$ is a codeword of an MRD$[t_l \times (n-r),\delta-r+i^*]_q$ code, whose generator matrix can be obtained by removing the first $i^*$ rows, the leftmost $i^*$ columns and the rightmost $r-i^*$ columns of $\boldsymbol{G}$. Thus rank$(\Psi_{t_l}^*(\boldsymbol{uG}))\geq \delta-r+i^*$. Furthermore, under the broken line of $\boldsymbol{C}$, since $\overline{\Psi}_{t_l}(u_{i^*})$ is a nonzero vector, the rightmost $r-i^*$ columns can contribute rank $r-i^*$. Therefore, rank$(\boldsymbol{C})\geq {\rm rank}(\Psi_{t_l}(\boldsymbol{uG})^*)+r-i^* \geq \delta-r+i^*+r-i^*=\delta$.

If $i^*\geq r$, then let $\Psi^*_{t_l}(\boldsymbol{uG})$ be the $t_l\times (n-r)$ matrix obtained by removing the leftmost $r$ columns of $\Psi_{t_l}(\boldsymbol{uG})$. By Lemma \ref{lem:subcode from Gab}, $\Psi_{t_l}^*(\boldsymbol{uG})$ is a codeword of an MRD$[t_l \times (n-r),\delta]_q$ code, whose generator matrix can be obtained by removing the first $r$ rows and the leftmost $r$ columns of $\boldsymbol{G}$. Thus rank$(\boldsymbol{C})\geq\delta$. \qed
\end{proof}

\begin{remark}\label{rek:main}
\begin{itemize}
\item[$(1)$] Take $l=1$, $r=0$ and $t_1=n\leq m$ in Theorem $\ref{the:from sys MRD-new2}$ to obtain Theorem $\ref{thm:shortening}$, i.e., Theorem $3$ in {\rm \cite{egrw}}.
\item[$(2)$] Take $l=1$ and $t_1=n-r$ in Theorem $\ref{the:from sys MRD-new2}$ to obtain Theorem $3.13$ in {\rm \cite{lf}}, which is a generalization of Theorem $\ref{thm:subcodes from Gab}$.
\item[$(3)$] Take $w=1$ and $r=0$ in Theorem $\ref{the:from sys MRD-new2}$ to obtain Theorem $3.2$ in {\rm \cite{zg}}, which requires each of the first $k$ columns of $\cal F$ contains at most $t_1$ dots. When $l\geq 2$, Theorem $\ref{the:from sys MRD-new2}$ relaxes this restriction, that is to say, each of the first $k$ columns of $\cal F$ contains at most $t_2$ dots, where $t_1\mid t_2$ and $t_1<t_2$.
\item[$(4)$] Take $w=1$ and $r=1$ in Theorem $\ref{the:from sys MRD-new2}$ to obtain Theorem $3.6$ in {\rm \cite{zg}}.
\end{itemize}
\end{remark}

\begin{corollary}\label{the:from sys MRD-3}
Let $r$ be a nonnegative integer and $m$, $n$, $\delta$, $k$, $t_1$, $t_2$ be positive integers satisfying $r+1\leq \delta \leq n-r$, $k=n-\delta+1$, $k\leq t_1<n-r\leq t_2\leq m$ and $t_2=st_1$. If an $m\times n$ Ferrers diagram $\mathcal F=[\gamma_0,\gamma_1,\ldots,\gamma_{n-1}]$ satisfies
\begin{itemize}
\item[$(1)$] $\gamma_{k-1}\leq wt_1$, 
\item[$(2)$] $\gamma_{k}\geq wt_1$ when $k<t_1$,
\item[$(3)$] $\gamma_{t_1}\geq t_2$, 
\item[$(4)$] $\gamma_{n-r+h}\geq t_2+\sum_{j=0}^{h} \gamma_j$ for $0\leq h\leq r-1$,
\end{itemize} 
for some $w\in\{1,2,\ldots,s\}$, then there exists an optimal $[\mathcal F, \sum_{i=0}^{k-1} \gamma_i, \delta]_q$ code for any prime power $q$.
\end{corollary}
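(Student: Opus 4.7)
The plan is to derive this corollary as an immediate specialization of Theorem~\ref{the:from sys MRD-new2} by setting $l=2$. I would begin by translating the parameters: the chain $1=t_0<t_1<t_2$ with the divisibility $t_1\mid t_2$ that Theorem~\ref{the:from sys MRD-new2} requires is supplied directly by the corollary's hypothesis $t_2=st_1$, and the quotient $s_2=t_2/t_1$ coincides with the corollary's $s$, so the admissible range $w\in\{1,2,\ldots,s_2\}$ in the theorem becomes the range $w\in\{1,2,\ldots,s\}$ in the corollary. The bound $t_{l-1}<n-r\leq t_l$ specializes to the given $t_1<n-r\leq t_2$, and the inequality $k\leq t_1$ is identical in both statements. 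The corollary's additional requirement $t_2\leq m$ (equivalently $t_l\leq\gamma_{n-1}$) ensures that the Ferrers diagram has enough rows to host the construction built inside the proof of Theorem~\ref{the:from sys MRD-new2}.

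Next I would verify that each of the four Ferrers conditions of Theorem~\ref{the:from sys MRD-new2} reduces, under $l=2$, to the corresponding condition of the corollary. Conditions~(1) and~(2) are verbatim the same. Condition~(3) of the theorem quantifies over $1\leq\theta\leq l-1$; for $l=2$ this collapses to the single instance $\gamma_{t_1}\geq t_2$, matching condition~(3) of the corollary. Condition~(4) of the theorem becomes $\gamma_{n-r+h}\geq t_2+\sum_{j=0}^{h}\gamma_j$ for $0\leq h\leq r-1$, matching condition~(4) of the corollary.

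With all hypotheses in place, a direct invocation of Theorem~\ref{the:from sys MRD-new2} with $l=2$ returns an optimal $[\mathcal F,\sum_{i=0}^{k-1}\gamma_i,\delta]_q$ code, finishing the argument. There is no substantive obstacle: the only point requiring care is the bookkeeping check that the $l=2$ specialization of each quantified condition in the theorem is syntactically the condition listed in the corollary, and that the quotient parameters $s_2$ and $s$ are consistently identified. The real mathematical content lives in Theorem~\ref{the:from sys MRD-new2} itself (namely the restricted Gabidulin construction together with the broken-line extension producing the extra $r$ columns), and the corollary simply records the most useful two-level case for ease of later application.
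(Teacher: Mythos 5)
Your proposal is correct and matches the paper's own proof exactly: the paper proves this corollary in one line, ``Apply Theorem~\ref{the:from sys MRD-new2} with $l=2$,'' and your argument is precisely that specialization with the parameter bookkeeping ($s_2=s$, $t_{l-1}<n-r\leq t_l$ becoming $t_1<n-r\leq t_2$, and the collapse of the quantified condition~(3)) written out explicitly. No gaps; the verification you supply is the routine checking the paper leaves implicit.
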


\proof Apply Theorem \ref{the:from sys MRD-new2} with $l=2$. \qed

\begin{example}\label{eg:sys MDS-5}
Let $\mathcal F=[10,10,10,10,10,15,15,15,15,15,15,15,15,15,15]$ be a $15\times 15$ Ferrers diagram.
Then apply Corollary $\ref{the:from sys MRD-3}$ with $r=0$, $s=3$, $t_1=5$, $t_2=15$ and $w=2$ to obtain an optimal $[\mathcal F, 40, 12]_q$ code for any prime power $q$.
\end{example}

\begin{example}\label{ex-sub}
Let $3t\geq n$ and $$\mathcal F=[\underbrace{\gamma_0,\ldots,\gamma_{k-1}}_{k}, \underbrace{2t,\ldots,2t}_{t-k}, \underbrace{3t,\ldots,3t}_{n-t}]$$ be a $3t\times n$ Ferrers diagram, where $k=n-\delta+1$, $1\leq \delta\leq n$ and $k\leq t< n$. Then apply Corollary $\ref{the:from sys MRD-3}$ with $r=0$, $s=3$, $t_1=t$, $t_2=3t$ and $w=2$ to obtain an optimal $[\mathcal F, \sum_{i=0}^{k-1} \gamma_i, \delta]_q$ code for any prime power $q$.
\end{example}

\begin{example}\label{eg:sys MDS-4}
For any even integer $n\geq 10$, let
\[\mathcal F=
\begin{array}{c@{\hspace{-5pt}}c@{\hspace{-5pt}}c}
&\begin{array}{cc}
\overbrace{\rule{25mm}{0mm}}^{\frac{n}{2}-1}&
\overbrace{\rule{20mm}{0mm}}^{\frac{n}{2}+1}
\end{array}
\\
\begin{array}{l}
\end{array}
&
\begin{array}{ccccccccc}
 \bullet & \bullet & \bullet & \cdots & \bullet & \bullet & \cdots & \bullet& \bullet \\
 &  \bullet & \bullet & \cdots & \bullet & \bullet & \cdots & \bullet& \bullet \\
 & &  \vdots &  & \vdots & \vdots &  & \vdots & \vdots \\
 & &  \bullet & \cdots & \bullet & \bullet & \cdots & \bullet& \bullet \\
 & &   &  &  & \bullet & \cdots & \bullet& \bullet \\
 & &   &  &  & \vdots &  & \vdots & \vdots \\
 & &   &  &  & \bullet & \cdots & \bullet& \bullet \\
 &  &   &  &  &  & & \bullet  & \bullet \\
 &  &   &  &  &  &  & & \bullet  \\
  &  &   &  &  &  &  & & \bullet
\end{array}
& \begin{array}{l}
\left.\rule{0mm}{9.92mm}\right\}\frac{n}{2}-1\\
\\\left.\rule{0mm}{15.15mm}\right\}\frac{n}{2}+2
\end{array}
\end{array}
\]
be an $(n+1)\times n$ Ferrers diagram. Then apply Corollary $\ref{the:from sys MRD-3}$ with $r=2$, $s=2$, $t_1=\frac{n}{2}-1$, $t_2=n-2$ and $w=1$ to obtain an optimal $[\mathcal F, \frac{n(n-10)}{4}+7, \frac{n}{2}+3]_q$ code for any prime power $q$.
\end{example}



\section{Constructions via different representations of elements of a finite field}

In this section, based on two different ways to represent elements of a finite field $\mathbb F_{q^m}$ (vector representation and matrix representation), we give two constructions for FDRM codes, where the first one is also given by Zhang and Ge recently (see Theorem 3.9 in \cite{zg}).

\begin{theorem}[Based on vector representation]\label{lift-product-1}
If there exists an $[\mathcal F,k,\delta]_{q^m}$ code, where $\mathcal F=[\gamma_0,\gamma_1,\ldots,\gamma_{n-1}]$, then there exists an $[\mathcal F',mk,\delta]_q$ code, where $\mathcal F'=[m\gamma_0,m\gamma_1,\linebreak \ldots,m\gamma_{n-1}]$.
\end{theorem}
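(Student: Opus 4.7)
The plan is to expand each entry of a codeword in $\mathcal C$ into its $m$-tuple of $\mathbb F_q$-coordinates, stretching an $m'\times n$ matrix over $\mathbb F_{q^m}$ (with $m'=\gamma_{n-1}$) into an $mm'\times n$ matrix over $\mathbb F_q$. Concretely, I would fix an ordered $\mathbb F_q$-basis $\beta=(\beta_0,\ldots,\beta_{m-1})$ of $\mathbb F_{q^m}$, apply the map $\Psi_m$ of \eqref{iso} to each row of a codeword $\boldsymbol M\in\mathcal C$ (viewing that row as an element of $\mathbb F_{q^m}^n$), and stack the resulting $m'$ blocks of height $m$ vertically to produce a matrix $\boldsymbol M'\in\mathbb F_q^{mm'\times n}$. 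Then $\mathcal C'=\{\boldsymbol M':\boldsymbol M\in\mathcal C\}$ is the candidate $[\mathcal F',mk,\delta]_q$ code.

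The shape, linearity and dimension would be handled first and are routine. The defining constraint on $\mathcal C$ says $\boldsymbol M(i,j)=0$ whenever $i\ge\gamma_j$, so each such zero entry contributes a zero block of $m$ consecutive entries to column $j$ of $\boldsymbol M'$; hence $\boldsymbol M'(i',j)=0$ for every $i'\ge m\gamma_j$, matching $\mathcal F'=[m\gamma_0,m\gamma_1,\ldots,m\gamma_{n-1}]$. Because $\Psi_m$ is $\mathbb F_q$-linear and injective entry by entry, the assignment $\boldsymbol M\mapsto\boldsymbol M'$ is an $\mathbb F_q$-linear bijection, giving $\dim_{\mathbb F_q}\mathcal C'=m\cdot\dim_{\mathbb F_{q^m}}\mathcal C=mk$.

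The main obstacle is showing that the $\mathbb F_q$-rank of any nonzero $\boldsymbol M'$ is still at least $\delta$. I would argue via kernels: view $\boldsymbol M$ as an $\mathbb F_{q^m}$-linear map acting on column vectors in $\mathbb F_{q^m}^n$ by right multiplication, and set $K=\{\boldsymbol v\in\mathbb F_{q^m}^n:\boldsymbol M\boldsymbol v^T=\boldsymbol 0\}$, so $\dim_{\mathbb F_{q^m}}K=n-r$ where $r={\rm rank}_{\mathbb F_{q^m}}(\boldsymbol M)\ge\delta$. Because the unfolding is entrywise $\mathbb F_q$-linear, a vector $\boldsymbol v\in\mathbb F_q^n$ satisfies $\boldsymbol M'\boldsymbol v^T=\boldsymbol 0$ over $\mathbb F_q$ if and only if $\boldsymbol M\boldsymbol v^T=\boldsymbol 0$ over $\mathbb F_{q^m}$, and hence the $\mathbb F_q$-kernel of $\boldsymbol M'$ equals $K\cap\mathbb F_q^n$. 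The punch line is the standard fact that $\dim_{\mathbb F_q}(V\cap\mathbb F_q^n)\le\dim_{\mathbb F_{q^m}}V$ for any $\mathbb F_{q^m}$-subspace $V\subseteq\mathbb F_{q^m}^n$, which I would prove in two lines by expanding a putative $\mathbb F_{q^m}$-linear dependence among $\mathbb F_q$-independent vectors of $\mathbb F_q^n$ along the basis $\beta$ and then using $\mathbb F_q$-independence of the $\beta_t$ coordinate by coordinate. Applied to $V=K$, this gives $\dim_{\mathbb F_q}\ker_{\mathbb F_q}(\boldsymbol M')\le n-r$, and rank-nullity over $\mathbb F_q$ yields ${\rm rank}_{\mathbb F_q}(\boldsymbol M')\ge r\ge\delta$. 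The only point that demands care is keeping the unfolding direction consistent between the shape check and the kernel computation; both conclusions rely only on the entrywise $\mathbb F_q$-linearity of $\Psi_m$, so no surprises should arise.
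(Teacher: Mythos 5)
Your proposal is correct and is essentially the paper's own proof: you build the same code $\mathcal C'$ by expanding each entry of a codeword into its $m\times 1$ column $\Psi_m$-representation and stacking, which is exactly the matrix $\boldsymbol D_{\boldsymbol C}$ in the paper. The only difference is that the paper dismisses the verification as ``readily checked,'' whereas you correctly supply the missing rank argument via $\ker_{\mathbb F_q}(\boldsymbol M')=K\cap\mathbb F_q^n$ together with $\dim_{\mathbb F_q}(V\cap\mathbb F_q^n)\leq \dim_{\mathbb F_{q^m}}V$ and rank--nullity.
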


\begin{proof}
By (\ref{iso}), each element in $\mathbb F_{q^m}$ can be represented as a column vector in $\mathbb F_{q}^{m \times 1}$ via the bijection $\Psi_m$, and $\Psi_m$ satisfies linearity. Let $\boldsymbol C$ be a codeword of the given $[\mathcal F,k,\delta]_{q^m}$ code $\mathcal C$, where $\mathcal F$ is a $\gamma_{n-1}\times n$ Ferrers diagram. Let
\begin{center}
$\boldsymbol D_C=\left(
   \begin{array}{cccc}
     \Psi_m(\boldsymbol C(0,0)) & \Psi_m(\boldsymbol C(0,1)) & \cdots & \Psi_m(\boldsymbol C(0,n-1)) \\
     \Psi_m(\boldsymbol C(1,0)) & \Psi_m(\boldsymbol C(1,1)) & \cdots & \Psi_m(\boldsymbol C(1,n-1)) \\
    \vdots & \vdots & \ddots  & \vdots \\
    \Psi_m(\boldsymbol C(\gamma_{n-1}-1,0)) & \Psi_m(\boldsymbol C(\gamma_{n-1}-1,1)) & \cdots & \Psi_m(\boldsymbol C(\gamma_{n-1}-1,n-1)) \\
   \end{array}
 \right)$,
\end{center}
and $\mathcal C'=\{\boldsymbol D_{\boldsymbol C}:\boldsymbol C\in \mathcal C\}$. It is readily checked that $\mathcal C'$ is an $[\mathcal F',mk,\delta]_q$ code, where $\mathcal F'=[m\gamma_0,m\gamma_1,\ldots,m\gamma_{n-1}]$. \qed
\end{proof}

Apart from (\ref{iso}), a possibility of representing the elements of $\mathbb F_{q^m}$ is given by means of matrices (see Chapter $2.5$ in \cite{ln}). The field $\mathbb F_{q^m}$ is isomorphic to a suitable subset of $\mathbb F^{m\times m}_q$. We can give this well-known fact as follows. Let
$g(x)=x^m+g_{m-1}x^{m-1}+\cdots+g_1x+g_0 \in \mathbb F_q[x]$ be a primitive polynomial over $\mathbb F_{q}$, whose {\em companion matrix} is
\begin{center}
$\boldsymbol G=\left(
   \begin{array}{cccccc}
     0 & 0 & 0 & \cdots & 0 & -g_0 \\
     1 & 0 & 0 & \cdots & 0 & -g_1 \\
     0 & 1 & 0 & \cdots & 0 & -g_2 \\
     0 & 0 & 1 & \cdots & 0 & -g_3 \\
    \vdots & \vdots & \vdots & \ddots & \vdots & \vdots \\
    0 & 0 & 0 & \cdots & 1 & -g_{m-1} \\
   \end{array}
 \right)$.
\end{center}
By the Cayley-Hamilton theorem in linear algebra, $\boldsymbol G$ is a root of $g(x)$. The set $\mathcal A=\{\boldsymbol G^i:0\leq i\leq q^m-2\}\cup\{\boldsymbol 0\}$ equipped with matrix addition and matrix multiplication is isomorphic to $\mathbb F_{q^m}$. Let $\alpha$ be a primitive element of $\mathbb F^*_{q^m}$. Then $\mathbb F_{q^m}=\{1,\alpha,\alpha^2,\ldots,\alpha^{q^m-2}\}\cup\{0\}$. Let $\Pi_m$ be the field isomorphism from $\mathbb F_{q^m}$ to $\mathcal A$ satisfying $\Pi_m(0)=\boldsymbol O$ and $\Pi_m(\alpha^i)= \boldsymbol G^i$ for $0\leq i\leq q^m-2$. Clearly, $\Pi_m(c\alpha^i)=c\Pi_m(\alpha^i)$ for $c\in \mathbb F_p$, where $p$ is a prime dividing $q$.


\begin{lemma}\label{lem-new}
Let $p$ be a prime and $q=p^l$. Let $\boldsymbol A \in \mathbb F^{s\times t}_{q^m}$ and
 \begin{center}
$\Pi_m(\boldsymbol A)\triangleq\left(
   \begin{array}{cccc}
     \Pi_m(\boldsymbol A(0,0)) & \Pi_m(\boldsymbol A(0,1)) & \cdots & \Pi_m(\boldsymbol A(0,t-1)) \\
     \Pi_m(\boldsymbol A(1,0)) & \Pi_m(\boldsymbol A(1,1)) & \cdots & \Pi_m(\boldsymbol A(1,t-1)) \\
    \vdots & \vdots & \ddots  & \vdots \\
    \Pi_m(\boldsymbol A(s-1,0)) & \Pi_m(\boldsymbol A(s-1,1)) & \cdots & \Pi_m(\boldsymbol A(s-1,t-1)) \\
   \end{array}
 \right)$.
\end{center}
If ${\rm rank}(\boldsymbol A)\geq \delta$ in $\mathbb F_{q^m}$, then ${\rm rank}(\Pi_m(\boldsymbol A))\geq m\delta$ in $\mathbb F_{q}$.
\end{lemma}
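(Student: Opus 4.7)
The plan is to recognize $\Pi_m(\boldsymbol A)$ as the matrix, over $\mathbb F_q$, of the same linear transformation that $\boldsymbol A$ defines over $\mathbb F_{q^m}$ — after restriction of scalars from $\mathbb F_{q^m}$ to $\mathbb F_q$. Once this identification is in place, the rank comparison is immediate from dimension counting.

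First I would verify carefully that $\Pi_m\colon \mathbb F_{q^m}\to \mathcal A\subseteq \mathbb F_q^{m\times m}$ is a ring homomorphism (indeed a field isomorphism, as the paper asserts). Concretely, $\Pi_m(a)$ is the matrix of the $\mathbb F_q$-linear map ``multiplication by $a$'' on $\mathbb F_{q^m}$ expressed in the power basis $\mathcal B=\{1,\alpha,\alpha^2,\ldots,\alpha^{m-1}\}$: the companion matrix $\boldsymbol G$ is, by construction, the matrix of multiplication by $\alpha$ in $\mathcal B$, so $\Pi_m(\alpha^i)=\boldsymbol G^i$ agrees with multiplication by $\alpha^i$, and additivity/multiplicativity follow because composition of multiplication maps corresponds to multiplication of the defining elements.

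Second, I would set up the natural $\mathbb F_q$-linear coordinate isomorphisms $\Phi_t\colon \mathbb F_{q^m}^t\to \mathbb F_q^{tm}$ and $\Phi_s\colon \mathbb F_{q^m}^s\to \mathbb F_q^{sm}$ obtained by concatenating the $\mathcal B$-coordinate vectors of the components. A direct block-by-block expansion, using the homomorphism property of $\Pi_m$ entrywise, gives the compatibility identity $\Phi_s(\boldsymbol A\boldsymbol x)=\Pi_m(\boldsymbol A)\cdot \Phi_t(\boldsymbol x)$ for every $\boldsymbol x\in \mathbb F_{q^m}^t$; this just says that inside each row-block $i$, the sum $\sum_j \boldsymbol A(i,j)x_j$ has coordinate vector $\sum_j \Pi_m(\boldsymbol A(i,j))\,\widetilde{x_j}$. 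Hence, under the identifications $\Phi_t,\Phi_s$, the matrix $\Pi_m(\boldsymbol A)$ represents the same linear transformation as $\boldsymbol A$ does over $\mathbb F_{q^m}$.

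Third, the rank claim drops out. If $\mathrm{rank}(\boldsymbol A)=r\ge \delta$ over $\mathbb F_{q^m}$, then $W:=\mathrm{Im}(\boldsymbol A)\subseteq \mathbb F_{q^m}^s$ has $\mathbb F_{q^m}$-dimension $r$, and hence $\mathbb F_q$-dimension $mr$. Its image $\Phi_s(W)\subseteq \mathbb F_q^{sm}$ is exactly $\mathrm{Im}(\Pi_m(\boldsymbol A))$ and still has $\mathbb F_q$-dimension $mr$. Therefore $\mathrm{rank}(\Pi_m(\boldsymbol A))=mr\ge m\delta$ (in fact equality always holds in the lemma's conclusion). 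The only step requiring care is the compatibility identity in paragraph two, but it is pure bookkeeping once one recalls that $\Pi_m(a)$ \emph{is} the matrix of multiplication by $a$ in $\mathcal B$; nothing in the argument genuinely uses the hypothesis $q=p^l$, so no serious obstacle arises.
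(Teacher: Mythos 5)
Your proof is correct, but it takes a genuinely different route from the paper's. The paper argues through determinants: it picks an invertible $\delta\times\delta$ submatrix $\boldsymbol D$ of $\boldsymbol A$, notes that the blocks of $\Pi_m(\boldsymbol D)$ all lie in the commutative matrix field $\mathcal A$ and hence mutually commute, and invokes Silvester's theorem \cite{sil} on determinants of block matrices with commuting blocks to get $\det(\Pi_m(\boldsymbol D))=\det\bigl(\Pi_m(\det(\boldsymbol D))\bigr)$; since $\det(\boldsymbol D)\neq 0$ and $\Pi_m$ is a field isomorphism, $\Pi_m(\det(\boldsymbol D))$ is invertible, so $\Pi_m(\boldsymbol A)$ contains an invertible $m\delta\times m\delta$ submatrix. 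You instead identify $\Pi_m$ with the regular representation --- $\Pi_m(a)$ is the matrix of multiplication by $a$ in the power basis, which tacitly requires $\alpha$ to be a root of $g$, the same reading the paper's definition implicitly needs for $\Pi_m$ to be additive at all --- and then read $\Pi_m(\boldsymbol A)$, via the coordinate isomorphisms $\Phi_t,\Phi_s$, as the matrix of $\boldsymbol x\mapsto \boldsymbol A\boldsymbol x$ after restriction of scalars from $\mathbb F_{q^m}$ to $\mathbb F_q$, so that a dimension count gives $\mathrm{rank}_{\mathbb F_q}(\Pi_m(\boldsymbol A))=m\cdot\mathrm{rank}_{\mathbb F_{q^m}}(\boldsymbol A)$. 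What each buys: the paper's argument is two lines once the block-determinant theorem is granted; yours is self-contained and more elementary (no determinant machinery), and it proves something strictly stronger --- equality of ranks, not merely the inequality $\geq m\delta$ --- while also making transparent, as you observe, that nothing beyond $q$ being a prime power is used.
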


\begin{proof}
Choose an invertible $\delta\times \delta$ submatrix $\boldsymbol D$ of $\boldsymbol A$. Then $\Pi_m(\boldsymbol D)$ consists of mutually commuting blocks and therefore Theorem 1 in \cite{sil} applies. This gives us $\det(\Pi_m(\boldsymbol D))=\det(\Pi_m(\det(\boldsymbol D))$. Since $\det(\boldsymbol D)$ is nonzero, so is $\Pi_m(\det(\boldsymbol D))$ and hence it is even invertible since $\Pi_m$ is a field automorphism. Thus $\det(\Pi_m(\boldsymbol D))$ is nonzero. \qed
\end{proof}

\begin{theorem}[Based on matrix representation]\label{lift-product}
If there exists an $[\mathcal F,k,\delta]_{q^m}$ code, where $\mathcal F=[\gamma_0,\gamma_1,\ldots,\gamma_{n-1}]$, then there exists an $[\mathcal F',mk,m\delta]_q$ code, where $$\mathcal F'=[\underbrace{m\gamma_0,\ldots,m\gamma_0}_{m}, \underbrace{m\gamma_1,\ldots,m\gamma_1}_{m}, \ldots,\underbrace{m\gamma_{n-1},\ldots,m\gamma_{n-1}}_{m}].$$
\end{theorem}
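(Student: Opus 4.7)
The plan is to mirror the proof of Theorem \ref{lift-product-1}, using the matrix-valued field isomorphism $\Pi_m$ in place of the vector map $\Psi_m$. For each codeword $\boldsymbol C\in\mathcal C$ of the given $[\mathcal F,k,\delta]_{q^m}$ code, I would define $\boldsymbol D_{\boldsymbol C}\in\mathbb F_q^{m\gamma_{n-1}\times mn}$ to be the block matrix whose $(i,j)$-block is $\Pi_m(\boldsymbol C(i,j))\in\mathcal A\subset\mathbb F_q^{m\times m}$, and set $\mathcal C'=\{\boldsymbol D_{\boldsymbol C}:\boldsymbol C\in\mathcal C\}$. It then remains to verify three things: support of $\mathcal C'$ on $\mathcal F'$, $\mathbb F_q$-linearity of dimension $mk$, and minimum rank distance exactly $m\delta$.

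Since $\Pi_m(0)=\boldsymbol O$, every empty cell of $\mathcal F$ contributes an all-zero $m\times m$ block to $\boldsymbol D_{\boldsymbol C}$, so the $j$-th block-column (columns $mj,\ldots,mj+m-1$) is supported in the top $m\gamma_j$ rows, which matches $\mathcal F'$. Additivity of $\mathcal C'$ is immediate from additivity of $\Pi_m$. For closure under $\mathbb F_q$-scalar multiplication, I would use that $\Pi_m$, being a field isomorphism, maps the unique order-$q$ subfield of $\mathbb F_{q^m}$ onto the unique order-$q$ subfield of $\mathcal A$, which is precisely the set of scalar matrices $\{c\boldsymbol I_m:c\in\mathbb F_q\}$. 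Thus for each $c\in\mathbb F_q$ there is a unique $c''\in\mathbb F_q\subset\mathbb F_{q^m}$ with $\Pi_m(c'')=c\boldsymbol I_m$, and the block-wise identity
\[
c\,\boldsymbol D_{\boldsymbol C} \;=\; \Pi_m(c'')\,\Pi_m(\boldsymbol C) \;=\; \Pi_m(c''\boldsymbol C) \;=\; \boldsymbol D_{c''\boldsymbol C}
\]
lies in $\mathcal C'$ because $c''\boldsymbol C\in\mathcal C$. So $\mathcal C'$ is $\mathbb F_q$-linear, and injectivity of $\Pi_m$ yields $\dim_{\mathbb F_q}\mathcal C'=\dim_{\mathbb F_q}\mathcal C=mk$.

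The rank assertion follows from Lemma \ref{lem-new}: any nonzero $\boldsymbol C\in\mathcal C$ has ${\rm rank}(\boldsymbol C)\geq\delta$, so ${\rm rank}(\boldsymbol D_{\boldsymbol C})\geq m\delta$. The matching upper bound comes from factoring a weight-$\delta$ codeword as $\boldsymbol C=\boldsymbol X\boldsymbol Y$ with $\boldsymbol X\in\mathbb F_{q^m}^{\gamma_{n-1}\times\delta}$ and $\boldsymbol Y\in\mathbb F_{q^m}^{\delta\times n}$; since $\Pi_m$ respects matrix multiplication block-wise, $\boldsymbol D_{\boldsymbol C}=\Pi_m(\boldsymbol X)\Pi_m(\boldsymbol Y)$ has rank at most $m\delta$. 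Hence the minimum rank distance of $\mathcal C'$ equals $m\delta$ exactly.

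The only step requiring real care is the $\mathbb F_q$-linearity: the excerpt only records the weaker property $\Pi_m(c\alpha^i)=c\Pi_m(\alpha^i)$ for $c\in\mathbb F_p$, so one cannot naively pull $\mathbb F_q$-scalars across $\Pi_m$. The remedy above trades the entry-wise outside scalar action $c\mapsto c\boldsymbol D_{\boldsymbol C}$ for an inside multiplication by $c''\in\mathbb F_q\subset\mathbb F_{q^m}$ using the identification $c\boldsymbol I_m=\Pi_m(c'')$; this rests on the elementary fact that any field isomorphism between two copies of $\mathbb F_{q^m}$ identifies their unique order-$q$ subfields. Once this bookkeeping is settled, the remaining verifications (Ferrers support, dimension count, and the rank sandwich) are routine.
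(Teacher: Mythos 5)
Your proposal is correct and takes essentially the same route as the paper's own proof: the identical blockwise $\Pi_m$ construction with Lemma \ref{lem-new} supplying the rank lower bound, where the paper dismisses the remaining checks as ``readily checked.'' Your extra details---establishing $\mathbb F_q$-linearity via the order-$q$ subfield of scalar matrices $\{c\boldsymbol I_m : c\in\mathbb F_q\}$ of $\mathcal A$ (valid, since $\mathcal A=\mathbb F_q[\boldsymbol G]$ contains them and $\mathcal C$ is $\mathbb F_{q^m}$-linear, so $c''\boldsymbol C\in\mathcal C$), and the rank factorization $\boldsymbol C=\boldsymbol X\boldsymbol Y$ showing the minimum distance is exactly $m\delta$---are sound fillings of the gaps the paper leaves implicit.
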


\begin{proof}
Let $\boldsymbol C$ be a codeword of the given $[\mathcal F,k,\delta]_{q^m}$ code $\mathcal C$, where $\mathcal F$ is a $\gamma_{n-1}\times n$ Ferrers diagram. Let
\begin{center}
$\boldsymbol D_C=\left(
   \begin{array}{cccc}
     \Pi_m(\boldsymbol C(0,0)) & \Pi_m(\boldsymbol C(0,1)) & \cdots & \Pi_m(\boldsymbol C(0,n-1)) \\
     \Pi_m(\boldsymbol C(1,0)) & \Pi_m(\boldsymbol C(1,1)) & \cdots & \Pi_m(\boldsymbol C(1,n-1)) \\
    \vdots & \vdots & \ddots  & \vdots \\
    \Pi_m(\boldsymbol C(\gamma_{n-1}-1,0)) & \Pi_m(\boldsymbol C(\gamma_{n-1}-1,1)) & \cdots & \Pi_m(\boldsymbol C(\gamma_{n-1}-1,n-1)) \\
   \end{array}
 \right)$,
\end{center}
and $\mathcal C'=\{\boldsymbol D_{\boldsymbol C}:\boldsymbol C\in \mathcal C\}$. By Lemma \ref{lem-new}, it is readily checked that $\mathcal C'$ is an $[\mathcal F',mk,m\delta]_q$ code. \qed
\end{proof}

\begin{remark}
The idea of Theorem $\ref{lift-product}$ is from Proposition $3.1$ in {\rm \cite{oo}}. Also, Theorems $\ref{lift-product-1}$ and $\ref{lift-product}$ can be generalized to deal with non-linear FDRM codes.
\end{remark}

\begin{theorem}\label{thm:optimal matrix rep}
If there exists an optimal $[\mathcal F,\gamma_0,n]_{q^m}$ code, where $\mathcal F=[\gamma_0,\gamma_1,\ldots,\gamma_{n-1}]$, then there exists an optimal $[\mathcal F',m\gamma_0,mn]_q$ code, where $$\mathcal F'=[\underbrace{m\gamma_0,\ldots,m\gamma_0}_{m}, \underbrace{m\gamma_1,\ldots,m\gamma_1}_{m}, \ldots,\underbrace{m\gamma_{n-1},\ldots,m\gamma_{n-1}}_{m}].$$
\end{theorem}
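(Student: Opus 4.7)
The plan is to apply Theorem \ref{lift-product} directly to the hypothesized $[\mathcal F,\gamma_0,n]_{q^m}$ code, which produces an $[\mathcal F',m\gamma_0,mn]_q$ code, and then verify that this code attains the upper bound of Lemma \ref{lem:upper bound}, hence is optimal. The key structural observation is that $\mathcal F'$ is the ``$m$-fold blow-up'' of $\mathcal F$: each dot of $\mathcal F$ at position $(i',j')$ corresponds to a full $m\times m$ block of dots in $\mathcal F'$ spanning rows $mi',\dots,mi'+m-1$ and columns $mj',\dots,mj'+m-1$. In particular, column $j$ of $\mathcal F'$ has exactly $m\gamma_{\lfloor j/m\rfloor}$ dots, all in the top rows.

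Let $v_i'$ denote the bound value from Lemma \ref{lem:upper bound} for $\mathcal F'$ with minimum distance $mn$, and write $i=ma+b$ with $0\leq b\leq m-1$ and $0\leq a\leq n-1$. A direct block count gives
$$v_i'=m\sum_{c=0}^{a-1}\max\{0,\,m(\gamma_c-a)-b\}+(b+1)\max\{0,\,m(\gamma_a-a)-b\}.$$
For $i=0$ this reduces to $v_0'=m\gamma_0$, matching the dimension of the constructed code, so it remains to show $v_i'\geq m\gamma_0$ for every $i\geq 1$.

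The main estimate proceeds by first using $m\geq b+1$ to factor $(b+1)$ out in front and unify the two sums, yielding
$$v_i'\geq (b+1)\sum_{c=0}^{a}\max\{0,\,m(\gamma_c-a)-b\}.$$
Setting $T=\{c\leq a:\gamma_c>a\}$, each nonzero term in the remaining sum equals $m(\gamma_c-a)-b\geq m-b\geq 1$, so the sum evaluates to $mv_a-b|T|$, where $v_a$ is the corresponding bound value for the original Ferrers diagram $\mathcal F$. Since $\gamma_c-a\geq 1$ for each $c\in T$, we have $|T|\leq v_a$, whence $v_i'\geq (b+1)(m-b)v_a$. Finally, the elementary inequality $(b+1)(m-b)=m+b(m-b-1)\geq m$, valid for $0\leq b\leq m-1$, combined with $v_a\geq\gamma_0$ (itself guaranteed by Lemma \ref{lem:upper bound} applied to the hypothesized $[\mathcal F,\gamma_0,n]_{q^m}$ code), delivers $v_i'\geq m\gamma_0$, completing the verification.

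The main obstacle, such as it is, lies in setting up the correct block-level expression for $v_i'$ and juggling the coefficients $m$ versus $b+1$ so that the inequality $|T|\leq v_a$ and the original code's Singleton bound $v_a\geq \gamma_0$ can be chained together; once the $m$-fold blow-up picture is made explicit, the remaining estimates are routine.
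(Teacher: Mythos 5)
Your proof is correct, and the first step coincides with the paper's: both apply Theorem \ref{lift-product} to the hypothesized code to get an $[\mathcal F',m\gamma_0,mn]_q$ code. Where you diverge is the optimality check, and here the paper is much more economical: since the new minimum distance is $\delta'=mn$, the $i=0$ bound of Lemma \ref{lem:upper bound} deletes the rightmost $mn-1$ columns of $\mathcal F'$, leaving only its first column, so $v'_0=m\gamma_0$; and because the constructed code is itself an $[\mathcal F',m\gamma_0,mn]_q$ code, Lemma \ref{lem:upper bound} applied to \emph{it} already forces $m\gamma_0\leq \min_i v'_i$. Hence $\min_i v'_i=m\gamma_0$ with no further computation — the paper phrases this as ``deleting the rightmost $mn-1$ columns,'' mirroring the observation that the original code's dimension $\gamma_0$ equals $v_0$ for $\mathcal F$. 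Your hand verification that $v'_i\geq m\gamma_0$ for all $i\geq 1$ is therefore logically redundant: the lower bound on $\min_i v'_i$ comes for free from the code's existence. That said, your computation is sound — the block-count formula $v'_{ma+b}=m\sum_{c=0}^{a-1}\max\{0,m(\gamma_c-a)-b\}+(b+1)\max\{0,m(\gamma_a-a)-b\}$ is right, the reduction to $(b+1)(mv_a-b|T|)$ with $|T|\leq v_a$ is valid, $(b+1)(m-b)\geq m$ holds for $0\leq b\leq m-1$, and you correctly source $v_a\geq\gamma_0$ from Lemma \ref{lem:upper bound} applied to the original code (it is \emph{not} a purely combinatorial fact about Ferrers diagrams, as $\mathcal F=[1,1]$ with $\delta=2$ shows). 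What your longer route buys is the standalone quantitative inequality $v'_{ma+b}\geq(b+1)(m-b)v_a\geq mv_a$ for the $m$-fold blow-up of any Ferrers diagram, which describes the entire bound profile of $\mathcal F'$ rather than just the single value needed here; what it costs is exactly that extra bookkeeping, which the paper's self-bounding trick renders unnecessary.
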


\proof Start from the given optimal $[\mathcal F,\gamma_0,n]_{q^m}$ code, whose dimension can be obtained by deleting its rightmost $n-1$ columns. Then apply Theorem \ref{lift-product} to obtain an $[\mathcal F',m\gamma_0,mn]_q$ code, whose optimality can be obtained by deleting its rightmost $mn-1$ columns.  \qed

\begin{example}\label{eg:lift-1}
By Example $\ref{eg:2,3,3,5}$, there exists an optimal $[\mathcal F,2,4]_{q^2}$ code for any prime power $q$, where $\mathcal F=[2,3,3,5]$ is a Ferrers diagram. Then apply Theorem $\ref{thm:optimal matrix rep}$ with $m=2$ to obtain an optimal $[\mathcal F',4,8]_{q}$ code, where $\mathcal F'=[4,4,6,6,6,6,10,10]$.
\end{example}

\section{Conclusion}

Four constructions for FDRM codes are presented in this paper. The first one makes use of a characterization on generator matrices of a class of systematic MRD codes. By introducing restricted Gabidulin codes, the second construction is presented, which unifies many known constructions for FDRM codes. The third and fourth constructions are based on two different ways to represent elements of a finite field $\mathbb F_{q^m}$ (vector representation and matrix representation).

Theorem \ref{cfdrm} was established by using a description on generator matrices of a class of systematic MRD codes shown in Lemma \ref{lem:subcode from MRD}. Giving more characterization on generator matrices of systematic MRD codes would be helpful to obtain more optimal FDRM codes.

Finally we summarize all the main constructions for FDRM codes as follows: $(1)$ constructions based on subcodes of MRD codes (see Theorem 3.6 in \cite{al}, Construction 3.5 in \cite{lf}, Theorems \ref{cfdrm} and \ref{the:from sys MRD-new2}); $(2)$ the construction based on MDS codes (see Construction 1 in \cite{egrw}); $(3)$ constructions by combining FDRM codes (see Constructions 4.7, 4.10, and 4.13 in \cite{lf}); $(4)$ constructions via different representations of elements of a finite field (see Theorems \ref{lift-product-1} and \ref{lift-product}).

\subsection*{Acknowledgements}
The authors express their gratitude to the anonymous referees for their detailed and constructive comments which are very helpful to the improvement of the paper. Especially, thank one of the referees for pointing out the reference \cite{sil} and simplifying the proof of Lemma \ref{lem-new}.

\end{document}